\newtheorem{thm}{Theorem} [section]
\newtheorem{corollary}[thm]{Corollary}
\newtheorem{prop}[thm]{Proposition}
\theoremstyle{definition}
\newtheorem{defn}[thm]{Definition}
\newtheorem{example}[thm]{Example}
\theoremstyle{remark}
\newtheorem{remark}[thm]{Remark}
\begin{document}

\numberwithin{equation}{section}

\newcommand{\hs}{\mbox{\hspace{.4em}}}
\newcommand{\ds}{\displaystyle}
\newcommand{\bd}{\begin{displaymath}}
\newcommand{\ed}{\end{displaymath}}
\newcommand{\bcd}{\begin{CD}}
\newcommand{\ecd}{\end{CD}}

\newcommand{\on}{\operatorname}
\newcommand{\proj}{\operatorname{Proj}}
\newcommand{\bproj}{\underline{\operatorname{Proj}}}

\newcommand{\spec}{\operatorname{Spec}}
\newcommand{\Spec}{\operatorname{Spec}}
\newcommand{\bspec}{\underline{\operatorname{Spec}}}
\newcommand{\pline}{{\mathbf P} ^1}
\newcommand{\aline}{{\mathbf A} ^1}
\newcommand{\pplane}{{\mathbf P}^2}
\newcommand{\aplane}{{\mathbf A}^2}
\newcommand{\coker}{{\operatorname{coker}}}
\newcommand{\ldb}{[[}
\newcommand{\rdb}{]]}

\newcommand{\Sym}{\operatorname{Sym}^{\bullet}}
\newcommand{\Symp}{\operatorname{Sym}}
\newcommand{\Pic}{\bf{Pic}}
\newcommand{\Aut}{\operatorname{Aut}}
\newcommand{\PAut}{\operatorname{PAut}}

\newcommand{\too}{\twoheadrightarrow}
\newcommand{\C}{{\mathbf C}}
\newcommand{\Z}{{\mathbf Z}}
\newcommand{\Q}{{\mathbf Q}}
\newcommand{\Cx}{{\mathbf C}^{\times}}
\newcommand{\Cbar}{\overline{\C}}
\newcommand{\Cxbar}{\overline{\Cx}}
\newcommand{\cA}{{\mathcal A}}
\newcommand{\cS}{{\mathcal S}}
\newcommand{\cZ}{{\mathcal Z}}
\newcommand{\cV}{{\mathcal V}}
\newcommand{\cM}{{\mathcal M}}
\newcommand{\bA}{{\mathbf A}}
\newcommand{\cB}{{\mathcal B}}
\newcommand{\cC}{{\mathcal C}}
\newcommand{\cD}{{\mathcal D}}
\newcommand{\D}{{\mathcal D}}
\newcommand{\cs}{{\mathbf C} ^*}
\newcommand{\boldc}{{\mathbf C}}
\newcommand{\cE}{{\mathcal E}}
\newcommand{\cF}{{\mathcal F}}
\newcommand{\bF}{{\mathbf F}}
\newcommand{\cG}{{\mathcal G}}
\newcommand{\G}{{\mathbb G}}
\newcommand{\cH}{{\mathcal H}}
\newcommand{\CI}{{\mathcal I}}
\newcommand{\cJ}{{\mathcal J}}
\newcommand{\cK}{{\mathcal K}}
\newcommand{\cL}{{\mathcal L}}
\newcommand{\baL}{{\overline{\mathcal L}}}

\newcommand{\Mf}{{\mathfrak M}}
\newcommand{\bM}{{\mathbf M}}
\newcommand{\bm}{{\mathbf m}}
\newcommand{\cN}{{\mathcal N}}
\newcommand{\theo}{\mathcal{O}}
\newcommand{\cP}{{\mathcal P}}
\newcommand{\cR}{{\mathcal R}}
\newcommand{\Pp}{{\mathbb P}}
\newcommand{\boldp}{{\mathbf P}}
\newcommand{\boldq}{{\mathbf Q}}
\newcommand{\bbL}{{\mathbf L}}
\newcommand{\cQ}{{\mathcal Q}}
\newcommand{\cO}{{\mathcal O}}
\newcommand{\Oo}{{\mathcal O}}
\newcommand{\cY}{{\mathcal Y}}
\newcommand{\OX}{{\Oo_X}}
\newcommand{\OY}{{\Oo_Y}}
\newcommand{\otY}{{\underset{\OY}{\ot}}}
\newcommand{\otX}{{\underset{\OX}{\ot}}}
\newcommand{\cU}{{\mathcal U}}\newcommand{\cX}{{\mathcal X}}
\newcommand{\cW}{{\mathcal W}}
\newcommand{\boldz}{{\mathbf Z}}
\newcommand{\qgr}{\operatorname{q-gr}}
\newcommand{\gr}{\operatorname{gr}}
\newcommand{\rk}{\operatorname{rk}}
\newcommand{\Sh}{\operatorname{Sh}}
\newcommand{\SH}{{\underline{\operatorname{Sh}}}}
\newcommand{\End}{\operatorname{End}}
\newcommand{\uEnd}{\underline{\operatorname{End}}}
\newcommand{\Hom}{\operatorname{Hom}}
\newcommand{\uHom}{\underline{\operatorname{Hom}}}
\newcommand{\uHomY}{\uHom_{\OY}}
\newcommand{\uHomX}{\uHom_{\OX}}
\newcommand{\Ext}{\operatorname{Ext}}
\newcommand{\bExt}{\operatorname{\bf{Ext}}}
\newcommand{\Tor}{\operatorname{Tor}}

\newcommand{\inv}{^{-1}}
\newcommand{\airtilde}{\widetilde{\hspace{.5em}}}
\newcommand{\airhat}{\widehat{\hspace{.5em}}}
\newcommand{\nt}{^{\circ}}
\newcommand{\del}{\partial}

\newcommand{\supp}{\operatorname{supp}}
\newcommand{\GK}{\operatorname{GK-dim}}
\newcommand{\hd}{\operatorname{hd}}
\newcommand{\id}{\operatorname{id}}
\newcommand{\res}{\operatorname{res}}
\newcommand{\lrar}{\leadsto}
\newcommand{\im}{\operatorname{Im}}
\newcommand{\HH}{\operatorname{H}}
\newcommand{\TF}{\operatorname{TF}}
\newcommand{\Bun}{\operatorname{Bun}}

\newcommand{\F}{\mathcal{F}}
\newcommand{\Ff}{\mathbb{F}}
\newcommand{\nthord}{^{(n)}}
\newcommand{\Gr}{{\mathfrak{Gr}}}

\newcommand{\Fr}{\operatorname{Fr}}
\newcommand{\GL}{\operatorname{GL}}
\newcommand{\gl}{\mathfrak{gl}}
\newcommand{\SL}{\operatorname{SL}}
\newcommand{\ff}{\footnote}
\newcommand{\ot}{\otimes}
\def\Ext{\operatorname {Ext}}
\def\Hom{\operatorname {Hom}}
\def\Ind{\operatorname {Ind}}
\def\bbZ{{\mathbb Z}}

\newcommand{\nc}{\newcommand}
\nc{\ol}{\overline} \nc{\cont}{\on{cont}} \nc{\rmod}{\on{mod}}
\nc{\Mtil}{\widetilde{M}} \nc{\wb}{\overline} \nc{\wt}{\widetilde}
\nc{\wh}{\widehat} \nc{\sm}{\setminus} \nc{\mc}{\mathcal}
\nc{\mbb}{\mathbb}  \nc{\K}{{\mc K}} \nc{\Kx}{{\mc K}^{\times}}
\nc{\Ox}{{\mc O}^{\times}} \nc{\unit}{{\bf \on{unit}}}
\nc{\boxt}{\boxtimes} \nc{\xarr}{\stackrel{\rightarrow}{x}}

\nc{\Ga}{\G_a}
 \nc{\PGL}{{\on{PGL}}}
 \nc{\PU}{{\on{PU}}}

\nc{\h}{{\mathfrak h}} \nc{\kk}{{\mathfrak k}}
 \nc{\Gm}{\G_m}
\nc{\Gabar}{\wb{\G}_a} \nc{\Gmbar}{\wb{\G}_m} \nc{\Gv}{G^\vee}
\nc{\Tv}{T^\vee} \nc{\Bv}{B^\vee} \nc{\g}{{\mathfrak g}}
\nc{\gv}{{\mathfrak g}^\vee} \nc{\BRGv}{\on{Rep}\Gv}
\nc{\BRTv}{\on{Rep}T^\vee}
 \nc{\Flv}{{\mathcal B}^\vee}
 \nc{\TFlv}{T^*\Flv}
 \nc{\Fl}{{\mathfrak Fl}}
\nc{\BRR}{{\mathcal R}} \nc{\Nv}{{\mathcal{N}}^\vee}
\nc{\St}{{\mathcal St}} \nc{\ST}{{\underline{\mathcal St}}}
\nc{\Hec}{{\bf{\mathcal H}}} \nc{\Hecblock}{{\bf{\mathcal
H_{\alpha,\beta}}}} \nc{\dualHec}{{\bf{\mathcal H^\vee}}}
\nc{\dualHecblock}{{\bf{\mathcal H^\vee_{\alpha,\beta}}}}
\newcommand{\ramBun}{{\bf{Bun}}}
\newcommand{\ramBuno}{\ramBun^{\circ}}

\nc{\Buntheta}{{\bf Bun}_{\theta}} \nc{\Bunthetao}{{\bf
Bun}_{\theta}^{\circ}} \nc{\BunGR}{{\bf Bun}_{G_\BR}}
\nc{\BunGRo}{{\bf Bun}_{G_\BR}^{\circ}}
\nc{\HC}{{\mathcal{HC}}}
\nc{\risom}{\stackrel{\sim}{\to}} \nc{\Hv}{{H^\vee}}
\nc{\bS}{{\mathbf S}}
\def\BRep{\operatorname {Rep}}
\def\Conn{\operatorname {Conn}}

\nc{\Vect}{{\operatorname{Vect}}}
\nc{\Hecke}{{\operatorname{Hecke}}}

\newcommand{\ZZ}{{Z_{\bullet}}}
\nc{\HZ}{{\mc H}\ZZ} \nc{\eps}{\epsilon}

\nc{\CN}{\mathcal N} \nc{\BA}{\mathbb A}

\nc{\ul}{\underline}

\nc{\bn}{\mathbf n} \nc{\Sets}{{\on{Sets}}} \nc{\Top}{{\on{Top}}}
\nc{\IntHom}{{\mathcal Hom}}

\nc{\Simp}{{\mathbf \Delta}} \nc{\Simpop}{{\mathbf\Delta^\circ}}

\nc{\Cyc}{{\mathbf \Lambda}} \nc{\Cycop}{{\mathbf\Lambda^\circ}}

\nc{\Mon}{{\mathbf \Lambda^{mon}}}
\nc{\Monop}{{(\mathbf\Lambda^{mon})\circ}}

\nc{\Aff}{{\on{Aff}}} \nc{\Sch}{{\on{Sch}}}

\nc{\bul}{\bullet}
\nc{\module}{{\operatorname{-mod}}}

\nc{\dstack}{{\mathcal D}}

\nc{\BL}{{\mathbb L}}

\nc{\BD}{{\mathbb D}}

\nc{\BR}{{\mathbb R}}

\nc{\BT}{{\mathbb T}}

\nc{\SCA}{{\mc{SCA}}}
\nc{\DGA}{{\mc DGA}}

\nc{\DSt}{{DSt}}

\nc{\lotimes}{{\otimes}^{\mathbf L}}

\nc{\bs}{\backslash}

\nc{\Lhat}{\widehat{\mc L}}

\nc{\QCoh}{QC}
\nc{\QC}{QC}
\nc{\Perf}{\rm{Perf}}
\nc{\Cat}{{\on{Cat}}}
\nc{\dgCat}{{\on{dgCat}}}
\nc{\bLa}{{\mathbf \Lambda}}

\nc{\BRHom}{\mathbf{R}\hspace{-0.15em}\on{Hom}}
\nc{\BREnd}{\mathbf{R}\hspace{-0.15em}\on{End}}
\nc{\colim}{\on{colim}}
\nc{\oo}{\infty}
\nc{\Mod}{{\on{Mod}}}

\nc\fh{\mathfrak h}
\nc\al{\alpha}
\nc\la{\alpha}
\nc\BGB{B\bs G/B}
\nc\QCb{QC^\flat}
\nc\qc{\on{QC}}

\nc{\fg}{\mathfrak g}

\nc{\Map}{\on{Map}} 

\nc{\fX}{\mathfrak X}

\nc{\XYX}{X\times_Y X}

\nc{\ch}{\check}
\nc{\fb}{\mathfrak b} \nc{\fu}{\mathfrak u} \nc{\st}{{st}}
\nc{\fU}{\mathfrak U}
\nc{\fZ}{\mathfrak Z}

\nc\fk{\mathfrak k} \nc\fp{\mathfrak p}

\nc{\BRP}{\mathbf{RP}} \nc{\rigid}{\text{rigid}}
\nc{\glob}{\text{glob}}

\nc{\cI}{\mathcal I}

\nc{\La}{\mathcal L}

\nc{\quot}{/\hspace{-.25em}/}

\nc\aff{\it{aff}}
\nc\BS{\mathbb S}

\nc\Loc{{\mc Loc}}
\nc\Tr{{\on{Tr}}}
\nc\Ch{{\mc Ch}}

\nc\ftr{{\mathfrak {tr}}}
\nc\fM{\mathfrak M}

\nc\Id{\operatorname{Id}}

\nc\bimod{\on{-bimod}}

\nc\ev{\operatorname{ev}}
\nc\coev{\operatorname{coev}}

\nc\pair{\operatorname{pair}}
\nc\kernel{\operatorname{kernel}}

\nc\Alg{\operatorname{Alg}}

\nc\init{\emptyset_{\text{\em init}}}
\nc\term{\emptyset_{\text{\em term}}}

\nc\Ev{\on{Ev}}
\nc\Coev{\on{Coev}}

\nc\es{\emptyset}
\nc\m{\text{\it min}}
\nc\M{\text{\it max}}
\nc\cross{\text{\it cr}}
\nc\tr{\on{tr}}
\nc\perf{\on{-perf}}
\nc\inthom{\mathcal Hom}
\nc\intend{\mathcal End}
\nc{\QSt}{{\mathcal QSt}}
\nc{\OO}{{\mathbb O}}

\nc\Ups{\Upsilon}

\title{Secondary traces}

\author{David Ben-Zvi}
\address{Department of Mathematics\\University of Texas\\Austin, TX 78712-0257}
\email{benzvi@math.utexas.edu}
\author{David Nadler}
\address{Department of Mathematics\\University
  of California\\Berkeley, CA 94720-3840}
\email{nadler@math.berkeley.edu}

\maketitle

\begin{abstract} 
We study an invariant, the secondary trace, attached to two commuting endomorphisms of a 2-dualizable object
in a symmetric monoidal higher category. We establish a secondary trace formula which encodes the natural symmetries of 
this invariant, identifying different realizations as an iterated trace. The proof consists of elementary Morse-theoretic arguments 
(with many accompanying pictures included) and may be seen as a concrete realization of the cobordism hypothesis with singularities on a marked 
2-torus. From this perspective, our main result identifies the secondary trace with two alternative presentations coming from the 
 the standard generators $S$ and $T$ of the mapping class group $SL_2(\Z)$.
 
We include two immediate consequences of the established invariance.  The first is a
modular invariance property for the 2-class function on a group arising as the 2-character of a categorical representation. 
The second is 
a generalization (for coherent sheaves or $\D$-modules) of the Atiyah-Bott-Lefschetz formula conjectured by Frenkel-Ng\^o 
in the case of a self-map of a smooth and proper  stack over a general base. 
\end{abstract}

 \tableofcontents


\section{Introduction}
We construct via elementary arguments a secondary trace formula encoding the natural symmetries of the secondary trace 
of two commuting right dualizable endomorphisms of a 2-dualizable object in a symmetric monoidal $(\oo, 2)$-category.
Independently of the formal assertions and proofs, we have found the many accompanying pictures extremely helpful in capturing the categorical constructions
and parallel Morse-theoretic analysis. We hope the interested reader could follow much of the paper like a picture book and only rely on the text when necessary.
Before turning to precise statements and immediate applications, let us comment on the inspiration and motivation for this work.

Since the influential preprint \cite{markarian} and the subsequent work
\cite{Cal1,Cal2,Ram,Ram2,Shk}, there have been many recent papers
\cite{Petit, lunts, polishchuk, cisinskitabuada} proving Riemann-Roch
and Lefschetz-type theorems in the context of differential graded categories and
Fourier-Mukai transforms. 
This paper places these results in 
the general formalism of traces in $\oo$-categories, and in
particular generalizes from dg categories to nonlinear homotopical settings.
There is a long history of abstract categorical advances~\cite{doldpuppe, may} including the recent~\cite{PS1, PS2},
culminating in our primary inspiration, Lurie's cobordism hypothesis with singularities~\cite{TFT}.
Lurie's theorem provides an exceptionally powerful unifying tool for higher
algebra (as well as a classification of extended topological field
theories with all possible defects). It provides a universal
refinement of graphical and pictorial calculi for
category theory, encoding  how higher
categories (with appropriate finiteness assumptions) are
representations of corresponding cobordism categories. The present paper
can be viewed as an elementary verification of a particular instance of the
cobordism hypothesis with singularities, using Morse theory to
factor an invariant categorical construction into elementary steps.

The primary motivation for this undertaking is the development of foundations
for ``homotopical harmonic analysis'' of group actions on categories,
aimed at decomposing derived categories of sheaves (rather than classical
function spaces) under the actions of natural operators. 
This undertaking
follows the groundbreaking path of Beilinson-Drinfeld within the geometric
Langlands program and is consonant with general themes in geometric
representation theory. 
The pursuit of a geometric analogue of the Arthur-Selberg trace formula
by Frenkel and Ng\^o~\cite{FN} has also been a source of inspiration and applications.
In this direction, we include in the final part of the introduction (Section~\ref{sect: app})
 a generalization of the Atiyah-Bott-Lefschetz formula for coherent sheaves or $\D$-modules for a self-map of a smooth and proper  stack over a general base.
Another inspiration is Lusztig's geometric theory of characters of finite group of Lie type, 
and in particular his nonabelian Fourier transform.
We expect secondary traces to have a variety of  applications in geometric
representation theory, which we plan to explore in forthcoming papers.

The companion paper \cite{primary} presents an alternative approach to Atiyah-Bott-Lefschetz formulas (without
the assumption of smoothness implicit here) via 
general functoriality properties of traces in higher categories.

\subsection{Symmetries of iterated traces}
Let $\cC$ be a symmetric monoidal $(\oo, 1)$-category with tensor $\otimes$ and unit $1_\cC$.
Given  a dualizable object $A\in\cC$, with dual denoted by $A^{op}$, the trace of an endomorphism $\Phi:A\to A$,
  is defined to be the compostion
$$\xymatrix{
\Tr_A(\Phi):
1_{\cA}\ar[r]^-{\Phi} & \End(A)\ar[r]^-{\sim} &  A\ot A^{op}
\ar[r]^-{\ev_A}& 1_\cA}
$$
One can visualize $\Tr_A(\Phi)$ by the picture
$$
\xymatrix{
 \ar@/_2pc/@{-}[d] A \ar[r]^-{\Phi} &A \ar@/^2pc/@{-}[d] \\
A^{op} \ar[r] &A^{op}\\
}
$$
where the arcs denote the coevaluation $\coev_A$ and evaluation $\ev_A$.
The special case $\Phi = \id_A$ provides the notion of the dimension of $A$. 

The key structure of the  trace is its cyclic symmetry.
Given dualizable objects $A, B\in \cC$ and a diagram of morphisms
$$\xymatrix{A  \ar@<+.5ex>[r]^\Phi &
  \ar@<+.5ex>[l]^{\Psi}B}$$ 
there is a cyclic symmetry
$$
\xymatrix{
m(\Phi, \Psi):\Tr_B(\Phi\circ \Psi) \ar[r]^-\sim & \Tr_A(\Psi \circ \Phi)
}
$$
One can visualize $m(\Phi, \Psi)$ by the sequence of equivalent pictures
$$
\xymatrix{
 \ar@/_2pc/@{-}[d] B \ar[r]^-{\Psi} & A \ar[r]^-{\Phi} &  B \ar@/^2pc/@{-}[d] \\
B^{op} \ar[rr]&& B^{op}\\
}
$$
$$
\xymatrix{
 \ar@/_2pc/@{-}[d] B \ar[r]^-{\Psi} &  A \ar@/^2pc/@{-}[d] \\
B^{op} \ar[r]^-{\Phi^{op}}& A^{op}\\
}
$$
$$
\xymatrix{
 \ar@/_2pc/@{-}[d] A \ar[r]^-{\Phi} & A \ar[r]^-{\Psi} &  A \ar@/^2pc/@{-}[d] \\
A^{op} \ar[rr]&& A^{op}\\
}
$$

Now let $\cA$ be a symmetric monoidal $(\oo, 2)$-category with tensor $\otimes$, unit $1_\cA$, and ``based loop" symmetric monoidal $(\oo, 1)$-category $\Omega\cA = \End_\cA(1_\cA)$. On the one hand,
we can forget the non-invertible $2$-morphisms and just as well discuss dualizable objects. On the other hand,
there is now the freedom to consider left and right dualizable 1-morphisms whose unit and counit 2-morphisms need not be
invertible.

Given dualizable objects $A, B \in \cA$, and
endomorphisms 
$\Phi:A\to A$, $\Phi': B\to B$, a right dualizable morphism $\Psi:A\to B$,
and a commuting transformation 
$$
\xymatrix{
\alpha:\Psi \circ \Phi \ar[r] & \Phi'\circ \Psi
}
$$
there is an induced trace map
$$
\xymatrix{
\varphi(\Psi, \alpha):\Tr_A(\Phi) \ar[r] &  \Tr_B(\Phi')
}
$$
built out of unit and counit morphisms, the commuting transformation and the cyclic symmetry.
One can visualize $\varphi(\Psi, \alpha)$ by the sequence of pictures
$$
\xymatrix{
 \ar@/_2pc/@{-}[d] A \ar[r]^-{\Phi} &  A \ar@/^2pc/@{-}[d] \\
A^{op} \ar[r] & A^{op}\\
}
$$
$$
\xymatrix{
 \ar@/_2pc/@{-}[d] A \ar[r]^-{\Phi} & A \ar[r]^-{\Psi} & B  \ar[r]^-{\Psi^r} &  A \ar@/^2pc/@{-}[d] \\
A^{op} \ar[rrr]&&& A^{op}\\
}
$$
$$
\xymatrix{
 \ar@/_2pc/@{-}[d] A \ar[r]^-{\Psi} & B \ar[r]^-{\Phi'} & B  \ar[r]^-{\Psi^r} &  A \ar@/^2pc/@{-}[d] \\
A^{op} \ar[rrr]&&& A^{op}\\
}
$$
$$
\xymatrix{
 \ar@/_2pc/@{-}[d] B \ar[r]^-{\Psi^r} & A \ar[r]^-{\Psi} & B  \ar[r]^-{\Phi'} &  B \ar@/^2pc/@{-}[d] \\
B^{op} \ar[rrr]&&& B^{op}\\
}
$$
$$
\xymatrix{
 \ar@/_2pc/@{-}[d] B \ar[r]^-{\Phi'} &  B \ar@/^2pc/@{-}[d] \\
B^{op} \ar[r] & B^{op}\\
}
$$

Now assume $A\in \cA$ is  a 2-dualizable (synonymously, fully dualizable) object in the sense that it is dualizable and is evaluation $\ev_A$ admits both left and right duals (synonymously, adjoints)~\cite{TFT}.
Then we have the following easy assertion.

\begin{prop}
Let $A$ be a 2-dualizable object of a symmetric monoidal $(\oo, 2)$-category $\cA$.

Let $\Phi:A\to A$ be a right dualizable endomorphism with right  adjoint $\Phi^r:A\to A$.

Then  $\Tr_A(\Phi) \in \Omega \cA$ is a dualizable object with dual $\Tr_A(\Phi)^\vee \simeq \Tr_A(\Phi^r) \in \Omega \cA$.
\end{prop}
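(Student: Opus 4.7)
The plan is to produce explicit evaluation and coevaluation 2-morphisms exhibiting $\Tr_A(\Phi^r)$ as dual to $\Tr_A(\Phi)$ in the symmetric monoidal $(\oo,1)$-category $\Omega \cA$, and then to verify the triangle identities. The raw material for the construction is of two kinds. First, the adjunction data $\eta_\Phi \colon \id_A \to \Phi^r \circ \Phi$ and $\epsilon_\Phi \colon \Phi \circ \Phi^r \to \id_A$ witnessing $\Phi \dashv \Phi^r$. Second, the adjunctions for $\ev_A$ and $\coev_A$ together with their units and counits, furnished by the 2-dualizability of $A$.

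Pictorially, $\Tr_A(\Phi)$ is a circle carrying a marked point labeled by $\Phi$, while the candidate dual $\Tr_A(\Phi^r)$ is the opposite-orientation circle labeled by $\Phi^r$. I would construct the coevaluation $c \colon \id_{1_\cA} \to \Tr_A(\Phi) \cdot \Tr_A(\Phi^r)$ in two stages. Stage one uses the unit of the adjunction for $\coev_A$ (available by 2-dualizability) to open the empty picture into a $1$-morphism represented by an identity arc on $A$, then replaces that identity arc by $\Phi^r \circ \Phi$ via $\eta_\Phi$. Stage two uses the unit of the adjunction for $\ev_A$ to bend the two ends of the arc closed, producing a pair of adjacent circles decorated with $\Phi$ and $\Phi^r$ and joined at one point: this is precisely the pair-of-pants picture presenting $\Tr_A(\Phi) \cdot \Tr_A(\Phi^r)$. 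The evaluation $e \colon \Tr_A(\Phi^r) \cdot \Tr_A(\Phi) \to \id_{1_\cA}$ is built dually from $\epsilon_\Phi$ together with the counit data for $\ev_A$ and $\coev_A$.

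The remaining step is to verify the two triangle identities $(c \otimes \id_{\Tr_A(\Phi)}) \circ (\id_{\Tr_A(\Phi)} \otimes e) \simeq \id$ and its mirror. Each unwinds to a decorated surface which, after Morse-theoretic simplification (cancellation of opposed saddles), reduces to a combination of the zig-zag identities for the three input adjunctions $\Phi \dashv \Phi^r$, $\ev_A^L \dashv \ev_A$, and $\coev_A \dashv \coev_A^R$ (and their mirrors).

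The main obstacle is bookkeeping rather than substance: one must organize the Morse simplifications so that the zig-zags apply in the intended order, and one must be careful that the two separate bending operations used to build $c$ and $e$ match up after the isotopy. From the cobordism-hypothesis perspective emphasized in the introduction, the entire statement is the assertion that the marked circle is self-dual as a $0$-dimensional bordism in the appropriate pulled-back bordism category, so once the pictures are drawn the triangle identities are visible as planar isotopies and the proof is essentially a translation.
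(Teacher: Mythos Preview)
Your approach is correct and uses the same raw material as the paper, but the paper organizes it more efficiently. Rather than building the (co)evaluation maps by hand and then verifying the triangle identities via Morse simplification, the paper observes that $\Tr_A(\Phi)$ factors as a composition of 1-morphisms
\[
\Tr_A(\Phi) \;=\; \ev_\Phi \circ \coev_A, \qquad \ev_\Phi := \ev_A \circ (\Phi \otimes \id_{A^{op}}),
\]
each of which admits a right adjoint: $\coev_A$ has right adjoint $R'_A$ by 2-dualizability, and $\ev_\Phi$ has right adjoint $R_\Phi = (\Phi^r \otimes \id_{A^{op}})\circ R_A$ since both $\ev_A$ and $\Phi$ do. Since dualizability in $\Omega\cA$ is precisely adjointability as a 1-endomorphism of $1_\cA$, the composite $\Tr_A(\Phi)$ is dualizable with dual $R'_A \circ R_\Phi$, and the triangle identities come for free from those of the constituent adjunctions---no separate bookkeeping is needed.

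The one remaining step, which you do not mention, is to identify $R'_A \circ R_\Phi$ with $\Tr_A(\Phi^r)$: unwinding the adjoints gives $\Tr_A(\ell_A \circ \Phi^r \circ r_A)$, and one then invokes the cyclic symmetry of the trace to cancel the Serre equivalence against its inverse. Your direct construction would in effect rediscover this Serre twist inside the pair-of-pants picture, but the composition-of-adjoints viewpoint makes both the dualizability and the identification of the dual transparent in one stroke.
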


The proposition opens up the possibility
 of forming a {\em secondary trace}. 
Suppose given right dualizable endomorphisms
$$
\xymatrix{
\Phi_a,\Phi_b: A\ar[r] & A
}
$$
and a commuting transformation 
$$
\xymatrix{
\alpha_{a, b}:\Phi_a\circ \Phi_b \ar[r] & \Phi_b\circ \Phi_a
}
$$

Then on the one hand, we have the  trace map
$$
\xymatrix{
\varphi(\Phi_a, \alpha_{a, b}):\Tr_A(\Phi_b) \ar[r] &  \Tr_A(\Phi_b)
}
$$
and  can form the iterated trace
$$
\Tr_{\Tr_A(\Phi_b)}(\varphi(\Phi_a, \alpha_{a, b}))   
\in \Omega\Omega\cA = \End_{\Omega \cA}(1_{\Omega\cA})
$$

On the other hand, we have the induced  transformation
$$
\xymatrix{
\alpha_{b, a^r} :\Phi_b\circ \Phi_a^r \ar[r]^-{\eta_{\Phi_a}} & 
\Phi_a^r\circ \Phi_a\circ \Phi_b\circ \Phi_a^r \ar[r]^-{\alpha_{a, b}} & 
\Phi_a^r\circ \Phi_b\circ \Phi_a\circ \Phi_a^r \ar[r]^-{\epsilon_{\Phi_a}} & 
\Phi_a^r\circ \Phi_b
}
$$
corresponding trace map
$$
\xymatrix{
\varphi(\Phi_b, \alpha_{b, a^r}):\Tr_A(\Phi_a^r) \ar[r] &  \Tr_A(\Phi_a^r)
}
$$
and  iterated trace
$$
\Tr_{\Tr_A(\Phi_a^r)}(\varphi(\Phi_b, \alpha_{b,a^r}))   
\in \Omega\Omega\cA = \End_{\Omega \cA}(1_{\Omega\cA})
$$

Here is the  main result of this paper. Its formulation is a generalization of \cite[Theorem 3.8]{polishchuk} from the setting of dg categories.

\begin{thm}[Secondary Trace Formula]\label{intro mainthm}
Let $A$ be a 2-dualizable object of a symmetric monoidal $(\oo, 2)$-category $\cA$.

Suppose given right dualizable endomorphisms
$$
\xymatrix{
\Phi_a,\Phi_b: A\ar[r] & A
}
$$
and a commuting transformation 
$$
\xymatrix{
\alpha_{a, b}:\Phi_a\circ \Phi_b \ar[r] & \Phi_b\circ \Phi_a
}
$$
there is a canonical equivalence
$$
\Tr_{\Tr_A(\Phi_b)}(\varphi(\Phi_a, \alpha_{a,b}))  \simeq 
\Tr_{\Tr_A(\Phi_a^r)}(\varphi(\Phi_b, \alpha_{b,a^r}))   
$$
\end{thm}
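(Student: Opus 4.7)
The plan is to interpret both sides of the equivalence as the value of a framed 2-dimensional TQFT with defects applied to a marked 2-torus, realizing the theorem as a concrete instance of the cobordism hypothesis with singularities advertised in the introduction. The 2-dualizability of $A$ together with the right dualizability of $\Phi_a, \Phi_b$ and the 2-morphism $\alpha_{a,b}$ assemble into defect data on $T^2$: the two standard generating cycles carry the 1-defects $\Phi_a$ and $\Phi_b$, and their unique transverse intersection carries the 2-defect $\alpha_{a,b}$. The two iterated traces in the theorem then correspond to the values extracted from this decorated torus by two different Morse slicings, matching the two presentations coming from the generators $T$ and $S$ of $SL_2(\Z)$.

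First I would verify that the LHS arises from one slicing. Cutting $T^2$ by a circle transverse to the $\Phi_a$ cycle, one reads off the inner invariant $\Tr_A(\Phi_b) \in \Omega\cA$ from the circle carrying $\Phi_b$. The complementary cylinder carries the $\Phi_a$ defect crossing the $\Phi_b$ defect with 2-morphism datum $\alpha_{a,b}$ which, unwound through the cyclic symmetry described in the introduction, is precisely the composition defining $\varphi(\Phi_a, \alpha_{a,b})$ as an endomorphism of $\Tr_A(\Phi_b)$. Closing up the cylinder yields its trace and recovers the LHS.

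Next I would analyze the orthogonal slicing, transverse to the $\Phi_b$ cycle. The key subtlety is that the $\Phi_a$-strand, which previously ran transverse to the slicing direction, now runs along the inner circle with opposite coorientation. Using the unit $\eta_{\Phi_a}$ and counit $\epsilon_{\Phi_a}$ of the adjunction $\Phi_a \dashv \Phi_a^r$ to bend the strand through a $90^\circ$ cap-and-cup replaces $\Phi_a$ by $\Phi_a^r$ on the inner circle and produces $\Tr_A(\Phi_a^r)$; simultaneously the 2-defect $\alpha_{a,b}$ is transported through the bend, and a short diagram chase identifies the resulting composite with exactly the transformation $\alpha_{b,a^r}$ specified in the statement. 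Closing up the remaining cylinder then computes the RHS. This slicing is related to the first by the $90^\circ$ rotation of $T^2$ implementing the generator $S \in SL_2(\Z)$.

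Since both iterated traces arise as values of the same defect TQFT on the same decorated torus under two Morse slicings, they are equivalent: the two slicings are connected by a 1-parameter family of Morse functions on $T^2$, and each handle slide or birth–death event along the way induces a canonical equivalence of the extracted invariant, using only the assumed dualizability of $A$ and adjointness of $\Phi_a, \Phi_b$. I expect the main obstacle to lie in the graphical bookkeeping of the third paragraph: one must check, at the level of 2-morphisms in $\cA$ rather than merely topologically on $T^2$, that bending the $\Phi_a$-strand and pushing $\alpha_{a,b}$ through the resulting cap and cup really yields the formula for $\alpha_{b,a^r}$ with all coherences intact. This is precisely the "picture book" verification promised in the introduction, and it should reduce to a sequence of concrete manipulations with triangle identities and cyclic symmetry of the primary trace.
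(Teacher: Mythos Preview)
Your high-level picture is exactly the one guiding the paper: both iterated traces are the invariant attached to a single decorated 2-torus, and the equivalence reflects that two Morse presentations compute the same thing. The difference lies in how the comparison is actually executed.

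You propose two \emph{separate} slicings (one per generating circle) and then a 1-parameter family of Morse functions interpolating between them, with handle slides and birth--death moves furnishing the intermediate equivalences. This either appeals to the cobordism hypothesis with singularities as a black box, or requires you to check every Cerf move for a framed surface with defects---and the paper is explicitly set up to \emph{avoid} both of those. Instead the paper uses a \emph{single} Morse function, namely the separable one $F(z_1,z_2)=\on{Re}(z_1)+\on{Re}(z_2)$, whose nine critical events (four ordinary critical points plus five stratified ones coming from the defect curves and their crossing) are organized into a poset $\cP$. A functor $T$ is built by hand from the category $\cQ$ of saturated subsets of $\cP$ into $\Omega\cA$, so that $T(\emptyset)=T(\cP)=1$ and the total morphism $T(\emptyset,\cP)$ is a well-defined element of $\Omega\Omega\cA$. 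The two iterated traces then arise as two different \emph{factorizations of the same morphism} $T(\emptyset,\cP)$, namely the two maximal chains $\emptyset\to s_a\to \M_a\to\cP$ and $\emptyset\to s_b\to\M_b\to\cP$ passing through the two saddles in opposite orders. No family of Morse functions is needed; the comparison is built into the commutativity of the poset diagram.

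What this buys the paper is that the ``hard'' step---your third paragraph, bending the $\Phi_a$-strand through cap and cup and checking that $\alpha_{a,b}$ becomes $\alpha_{b,a^r}$---is localized to a single explicit verification (their equivalences $\mu_a,\mu_b$ and the identification of $T(s_k,\M_k)$ with $\varphi\otimes\id$), rather than being spread across a sequence of Cerf moves. Your sketch would work in principle, but it defers precisely the content the paper isolates and carries out.
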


\begin{remark}[Shearing Formula]
We also can  deduce a  canonical equivalence
$$
\xymatrix{
 \varphi(\Phi_a \circ \Phi_b, \alpha_{a, b}')\simeq \varphi(\Phi_a, \alpha_{a. b}):\Tr(\Phi_b)\ar[r] & \Tr(\Phi_b)
}$$
and hence a canonical equivalence
$$
\xymatrix{
\Tr_{\Tr_A(\Phi_b)}( \varphi(\Phi_a \circ \Phi_b, \alpha_{a, b}'))
\simeq 
\Tr_{\Tr_A(\Phi_b)}
(\varphi(\Phi_a, \alpha_{a. b}))
}
$$
Here $\alpha_{a, b}'$ is the composition of $\alpha_{a, b}$ with the identity self-commuting transformation of $\Phi_a$.

\end{remark}

The geometric picture underlying the theorem and remark is that of a standard  2-torus with its meridians marked by $\Phi_a$ and $\Phi_b$
and their intersection point marked by $\alpha_{a, b}$. The different realizations of the secondary trace correspond to different ways to parse this picture into elementary pieces.  
We provide a detailed proof of Theorem \ref{intro mainthm}, applying Morse theory on the 2-torus to reduce the identifications to
a sequence of elementary moves. This is directly in the spirit of the cobordism hypothesis with singularities, from which one can deduce the theorem and remark. We nevertheless believe the
concrete arguments and accompanying pictures of this paper to be potentially illuminating to readers.

%
%

There are an infinite sequence of secondary trace invariants
labelled by the mapping class group $SL_2(\Z)$.
%
The three secondary trace invariants in the theorem and remark  correspond to the identity and the two standard generators $S,T \in SL_2(\Z)$. It follows from the cobordism hypothesis with singularities that all these invariants are canonically identified.
The techniques of this paper allow one to verify any one of these relations by hand (though one needs to be more careful with higher coherences).
In the next section, we mention a simple discrete setting where the identifications of the theorem and remark suffice to
establish full modular invariance.

\subsection{Fourier invariance of 2-characters} Fix a ground field $k$.

An immediate application of the secondary trace formula is to {\em 2-characters} of group actions on categories (as discussed in \cite{GK}).

As the source of 2-dualizable objects, we take smooth and proper dg categories (for example, derived categories
of coherent sheaves on smooth projective varieties). Or even more concretely, we take 2-vector spaces in the sense of finite module categories over 
$\Vect(k)$ (replacing varieties with finite sets), or semisimple $k$-linear categories with
finitely many simple objects.

Given a smooth and proper dg category $\cC$,  as the source of commuting pairs of endomorphisms, we look for groups $G$ acting on $\cC$ (for example, through an action on the underlying variety or set) and focus on
commuting pairs of elements of $G$. 

In this setting one can attach a function $\chi^{(2)}(\cC)$
on the set of conjugacy classes of pairs of commuting elements in $G$, which attaches to a pair $(g,h)$ 
the secondary trace 
$$
\chi^{(2)}(\cC)(g,h)=\Tr_{\Tr_\cC(h)}(g)
$$
corresponding to the action of $g$ on the categorical trace of $h$ acting on $\cC$.
Note that in this setting, the commuting transformation $\alpha$ is implicit in the $G$-action on $\cC$.

Observe that the 
 the groupoid of conjugacy classes of pairs of commuting elements $(g,h)$ is naturally identified with 
the groupoid $\Loc_{G}(T^2)$ of $G$-local systems on the 2-torus.
 It follows that $SL_2(\Z)$ 
naturally acts on the vector space of 2-class functions $k[\Loc_G(T^2)]$.

The secondary trace formula and shearing formula immediately imply the following symmetry for the generators $S, T \in SL_2(\Z)$.
Since the 2-class functions $k[\Loc_G(T^2)]$ form  a discrete vector space, the symmetry for all of $ SL_2(\Z)$ follows as well.

%


\begin{thm} The 2-character $\chi^{(2)}(\cC)$ is Fourier and shear invariant: for any commuting pair of elements
$(g,h)$, we have $$\chi^{(2)}(\cC)(g,h)=\chi^{(2)}(\cC)(h\inv,g)=\chi^{(2)}(\cC)(gh,h)$$ 
Thus $\chi^{(2)}(\cC)$ is
invariant under the natural $SL_2(\Z)$ action on 2-class functions $k[\Loc_G(T^2)]$.
\end{thm}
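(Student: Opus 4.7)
The plan is to read off the two elementary identities $\chi^{(2)}(\cC)(g,h)=\chi^{(2)}(\cC)(h^{-1},g)$ and $\chi^{(2)}(\cC)(g,h)=\chi^{(2)}(\cC)(gh,h)$ directly from the Secondary Trace Formula (Theorem~\ref{intro mainthm}) and from the Shearing Formula remark, and then upgrade to the full $SL_2(\Z)$-invariance purely formally.

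First I would install the translation between group actions and the input of Theorem~\ref{intro mainthm}. A $G$-action on $\cC$ makes every $\Phi_g$ an auto-equivalence, so $\Phi_g$ is automatically right (and left) dualizable with $\Phi_g^r \simeq \Phi_{g^{-1}}$, and both the unit $\eta_{\Phi_g}$ and counit $\epsilon_{\Phi_g}$ are equivalences. For a commuting pair $(g,h)$, the coherent group compatibility of the action gives a canonical commuting transformation $\alpha_{g,h}\colon \Phi_g\Phi_h \simeq \Phi_{gh}=\Phi_{hg}\simeq \Phi_h\Phi_g$. Under these identifications, the definition $\chi^{(2)}(\cC)(g,h) = \Tr_{\Tr_\cC(h)}(g)$ is precisely the left-hand side of the Secondary Trace Formula with $\Phi_a=\Phi_g$, $\Phi_b=\Phi_h$, and the induced transformation $\alpha_{b,a^r}$ of the statement reduces, after applying the unit/counit equivalences, to the natural commuting transformation for the commuting pair $(h, g^{-1})$.

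For the $S$-invariance I would then invoke Theorem~\ref{intro mainthm} as a black box: the right-hand side becomes $\Tr_{\Tr_\cC(\Phi_{g^{-1}})}(\varphi(\Phi_h,\alpha_{h,g^{-1}}))=\chi^{(2)}(\cC)(h,g^{-1})$, yielding $\chi^{(2)}(\cC)(g,h)=\chi^{(2)}(\cC)(h,g^{-1})$. Substituting the commuting pair $(g,h)\mapsto (h^{-1},g)$ in this identity rewrites it in the form claimed in the theorem. For the $T$-invariance I would invoke the Shearing Formula remark: the equivalence $\varphi(\Phi_g\Phi_h,\alpha_{g,h}')\simeq \varphi(\Phi_g,\alpha_{g,h})$ of endomorphisms of $\Tr_\cC(\Phi_h)$ passes through the iterated trace $\Tr_{\Tr_\cC(\Phi_h)}(-)$ and gives $\chi^{(2)}(\cC)(gh,h)=\chi^{(2)}(\cC)(g,h)$.

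For the last clause, because $S$ and $T$ generate $SL_2(\Z)$, simultaneous invariance under these two generators automatically upgrades to invariance under every element; here no higher coherence has to be checked since $k[\Loc_G(T^2)]$ is a discrete vector space, so the relations in $SL_2(\Z)$ impose only equalities of scalars that are forced by invariance under generators. The only genuine obstacle is bookkeeping, namely pinning down the orientation convention in the identification of $\Loc_G(T^2)$ with conjugacy classes of commuting pairs, so that the two elementary moves produced by Theorem~\ref{intro mainthm} and the Shearing remark match the standard generators $S$ and $T$ of the mapping class group; once these conventions are fixed, the proof is nothing more than rewriting the two categorical identities as group-theoretic identities on $\chi^{(2)}$.
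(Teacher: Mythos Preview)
Your proposal is correct and follows essentially the same approach as the paper: the paper does not give a detailed proof but simply states that the secondary trace formula and the shearing formula immediately imply the $S$- and $T$-invariance, and that full $SL_2(\Z)$-invariance then follows because $k[\Loc_G(T^2)]$ is a discrete vector space. Your write-up spells out exactly these steps, including the identification $\Phi_g^r\simeq\Phi_{g^{-1}}$ and the substitution needed to match the form $\chi^{(2)}(\cC)(g,h)=\chi^{(2)}(\cC)(h^{-1},g)$ stated in the theorem.
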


\begin{remark}
The terminology ``Fourier transform" for the interchange of $g$ and $h$ derives from Lusztig's Fourier transform
for finite groups, which is realized through the $SL_2(\Z)$-action on the fusion ring of $G$
(action of the mapping class group on the value of the corresponding Dijkgraaf-Witten 3d topological field theory on the 2-torus).
We plan to return to the relation to Lusztig's theory in future work. 
\end{remark}

\begin{remark}The above theorem  is easily verified when the $G$-category $\cC$ is of geometric origin.
For example, suppose a finite group $G$ acts on a finite set $X$ and we take $\cC=\Vect(X)$. Then to any commuting pair
$g,h\in G$ we can attach the orbifold $X^{g,h}$ of points of $X$ fixed by the pair $g,h$ (up to the action of 
the joint centralizer of the pair). Then we find that the 2-character $\chi^{(2)}(\cC)(g,h)$ simply measures the cardinality of $X^{g,h}$,
and hence is evidently modular invariant.
\end{remark}

\begin{remark}
The introduction of 2-class functions in \cite{GK} was inspired by the description by Hopkins-Kuhn-Ravenel \cite{HKR}
of $G$-characters in Morava K-theory $K_n$ of the classifying space $BG$ in terms of conjugacy classes of
commuting $n$-tuples of elements of $G$. The Fourier invariance result is analogous to the $\Aut(\Z_p^n)$-symmetry
of the HKR characters.
\end{remark}

\subsubsection{Topological field theory interpretation.} 
By the cobordism hypothesis, a 2-dualizable category $\cC$ defines a 2-dimensional framed topological field theory $\cZ_\cC$.
An action of a finite group $G$ on $\cC$ allows one to  gauge $\cZ_\cC$,  or in other words, to couple
$\cZ_\cC$ to $G$-local systems on framed 2-manifolds. 
Mathematically speaking, $\cC$ defines a 2-dualizable object in the 2-category of $G$-categories, or in other words, 
a boundary condition
in the 3-dimensional  Dijkgraaf-Witten theory $\cZ_G$ that assigns $G$-categories to a point.
Said another way,
$\cC$ defines a morphism from the vacuum 2-dimensional topological field theory to $\cZ_G$.
The 2-character $\chi^{(2)}(\cC)\in \cZ_G(T^2)=k[\Loc_G(T^2)]$  is the value of this morphism on the 2-torus. It is immediate that the morphism defined by $\cC$ is invariant under diffeomorphism,
so that $\chi^{(2)}(\cC)$ is invariant under the $SL_2(\Z)$-action on $\cZ_G(T^2)$.

The same reasoning applied to 1-dimensional topological field theory accounts for the cyclicity of the usual Chern character, i.e., the $S^1$-invariance of the trace map from $K$-theory to Hochschild homology (see \cite{TV} for a discussion).

Similar discussions apply to categories with algebraic or smooth  (synonomously, infinitesimally trivialized, strong, or Harish Chandra) actions of algebraic
groups $G$. Such categories
define boundary conditions in 3-dimensional gauge theories studied in \cite{BFN,character}, and their 
2-characters define modular invariant classes in the appropriate cohomological invariants of $\Loc_G(T^2)$ defined by the gauge theory.

\subsection{Applications in a traditional spirit}\label{sect: app}

Here we apply Theorem~\ref{intro mainthm} to deduce generalizations of traditional trace formulas.
We work in the context of derived algebraic geometry over a fixed base ring $R$.

\subsubsection{Atiyah-Bott Fixed Point Theorem}

Let $p:X\to \Spec R$ be a  smooth and proper stack so that $\Perf(X)$ is 2-dualizable as an $R$-linear small stable $\oo$-category.

%
%
%
%
Let $f:X\to X$ be an automorphism, and take
$$
\xymatrix{
\Phi_a:\Perf(X) \ar[r] & \Perf(X) &
\Phi_a(G) = f_*G
}
$$
It is invertible with inverse $\Phi_a^{-1}(G) = f^*G$.

Let $F \in \Perf(X)$, and take
$$
\xymatrix{
\Phi_b:\Perf(X) \ar[r] & \Perf(X) &
\Phi_b(G) = G \otimes_{\cO_X} F
}
$$
It is right dualizable with right adjoint
$$
\xymatrix{
\Phi^r_b:\Perf(X) \ar[r] & \Perf(X) &
\Phi^r_b(G) = G \otimes_{\cO_X} F^\vee
}
$$

Suppose $F$ is $f$-equivariant in the sense that we are given an equivalence
$$
\xymatrix{
\beta:f^*F\ar[r]^-\sim & F.
}$$
Observe that this is the same thing as
 a commuting transformation
$$
\xymatrix{
\alpha_{a,b}:\Phi_a\circ \Phi_b \ar[r]^-\sim & \Phi_b\circ \Phi_a.
}
$$

Now we calculate the first traces
$$
\xymatrix{
\Tr(\Phi_a^r) =   p_*\cO_{X^f}
&
\Tr(\Phi_b) = p_*\Delta^*\Delta_*F
}
$$
where $X^f$ denotes the derived fixed points of $f$ acting on $X$, and $\Delta:X\to X \times_{\Spec R} X$ denotes the diagonal.

Then we calculate that the trace morphism
$
\varphi(\Phi_a, \alpha_{a,b}) 
$
is simply the induced equivalence
$$
\xymatrix{
(f, \beta)_*:p_*\Delta^*\Delta_*F \ar[r]^-\sim &  p_*\Delta^*\Delta_*F
}
$$
And we calculate that the trace  morphism
$\varphi(\Phi_b, \alpha_{b,a^r} )$ 
is the composition
 $$
 \xymatrix{
 p_*\cO_{X^f} \ar[r]^-\eta &
  p_*((F\otimes F^\vee)|_{X^f})  
 \ar[r]^-{\beta \otimes \id} &
  p_*((F\otimes F^\vee)|_{X^f})  
 \ar[r]^-\eps &
   p_*\cO_{X^f}
  }
$$

Finally, Theorem~\ref{mainthm} provides the following equivalence
  which is a generalized form of the Atiyah-Bott Fixed Point Formula.

\begin{corollary} With the preceding setup, we have an identification
$$
 \Tr_{ p_*(F|_{X^f})}(\beta|_{X^f}) \simeq
  \Tr_{p_*\Delta^*\Delta_*F}((f, \beta)_*) \in R
  $$
\end{corollary}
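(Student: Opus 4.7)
The corollary is a direct specialization of Theorem~\ref{intro mainthm}. I would apply the theorem to the pair of right dualizable endomorphisms $(\Phi_a, \Phi_b)$ of $\Perf(X)$ together with the commuting transformation $\alpha_{a,b}$ determined by the equivariance datum $\beta : f^*F \xrightarrow{\sim} F$. This yields the canonical equivalence
\begin{equation*}
\Tr_{\Tr_A(\Phi_b)}(\varphi(\Phi_a, \alpha_{a,b})) \simeq \Tr_{\Tr_A(\Phi_a^r)}(\varphi(\Phi_b, \alpha_{b,a^r})).
\end{equation*}
Inserting the computations of the first traces and trace morphisms that precede the corollary, namely $\Tr_A(\Phi_b) \simeq p_*\Delta^*\Delta_*F$ and $\varphi(\Phi_a, \alpha_{a,b}) \simeq (f,\beta)_*$, the left-hand side is immediately identified with $\Tr_{p_*\Delta^*\Delta_*F}((f,\beta)_*)$, which is one side of the corollary.

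It remains to match the right-hand side of the displayed equivalence with $\Tr_{p_*(F|_{X^f})}(\beta|_{X^f})$. By the earlier computations, $\Tr_A(\Phi_a^r) \simeq p_*\cO_{X^f}$, and the trace morphism $\varphi(\Phi_b, \alpha_{b,a^r})$ is precisely $p_*$ of the composition
\begin{equation*}
\cO_{X^f} \xrightarrow{\coev} F|_{X^f} \otimes F^\vee|_{X^f} \xrightarrow{\beta|_{X^f} \otimes \id} F|_{X^f} \otimes F^\vee|_{X^f} \xrightarrow{\ev} \cO_{X^f}
\end{equation*}
internal to $\Perf(X^f)$; that is, it is $p_*$ of the $\Perf(X^f)$-valued trace of $\beta|_{X^f}$ on $F|_{X^f}$, viewed as an endomorphism of the unit $\cO_{X^f}$.

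The remaining step is to recognize that the iterated trace of this endomorphism on $p_*\cO_{X^f}$ coincides with $\Tr_{p_*(F|_{X^f})}(\beta|_{X^f})$. This is obtained by the 1-categorical cyclic symmetry $m(\Phi,\Psi)$ from the introduction, applied to the factorization of $\varphi(\Phi_b, \alpha_{b,a^r})$ as $\eps \circ [(\beta|_{X^f} \otimes \id) \circ \eta]$: cyclicity moves the $\coev/\ev$ for $F|_{X^f}$ (pushed forward by $p$) into the position of the structure morphisms exhibiting dualizability of $p_*(F|_{X^f})$ in $R\text{-mod}$, leaving $\beta|_{X^f}$ as the endomorphism whose trace is being computed. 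Equivalently, this is the compatibility of categorical trace with the proper, symmetric-monoidal-up-to-projection-formula pushforward $p_*$.

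The computational portion is routine given the explicit descriptions recorded above; the only genuine content is the final cyclic manipulation in the last paragraph. The main potential obstacle lies in carefully tracking the duality data for $(F|_{X^f}, F^\vee|_{X^f})$ after applying $p_*$ and verifying that the triangle (zig-zag) identities correctly recombine with the cyclic symmetry to produce $\Tr_{p_*(F|_{X^f})}(\beta|_{X^f})$; this should be regarded as a special case of the $S$-move in the secondary trace formula, which is exactly the content of Theorem~\ref{intro mainthm}.
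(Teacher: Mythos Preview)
Your approach is the same as the paper's: the corollary is stated as a direct consequence of Theorem~\ref{mainthm}, with the preceding computations of $\Tr(\Phi_b)$, $\Tr(\Phi_a^r)$, $\varphi(\Phi_a,\alpha_{a,b})$, and $\varphi(\Phi_b,\alpha_{b,a^r})$ substituted in. The paper gives no argument beyond ``Finally, Theorem~\ref{mainthm} provides the following equivalence.''

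You actually go further than the paper by isolating the one point that is not literally read off from the theorem: the identification
\[
\Tr_{p_*\cO_{X^f}}\bigl(\varphi(\Phi_b,\alpha_{b,a^r})\bigr)\ \simeq\ \Tr_{p_*(F|_{X^f})}(\beta|_{X^f}),
\]
which the paper leaves implicit. Your instinct that this is a cyclic-symmetry manipulation is correct in spirit, but the phrasing ``cyclicity moves the coev/ev for $F|_{X^f}$ (pushed forward by $p$) into the position of the structure morphisms exhibiting dualizability of $p_*(F|_{X^f})$'' is not quite right as stated: $p_*$ is not symmetric monoidal, so $p_*\eta$ and $p_*\eps$ are not themselves the duality data for $p_*(F|_{X^f})$ in $R$-modules. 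Your second description --- compatibility of trace with $p_*$ via the projection formula --- is the correct justification; this is exactly the trace-functoriality treated in the companion paper~\cite{primary}, and can alternatively be checked by hand using the explicit duality data for $\Tr(\Phi_a^r)$ provided by Proposition~\ref{cylinders}. The appeal to ``a special case of the $S$-move'' at the end is circular and should be dropped.
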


In particular, if $p:X\to \Spec k $ is a smooth and proper scheme (so not a more general
 stack), and $f$ has isolated fixed points,
 then the fixed points $X^f$ have trivial derived structure, and the Hochschild-Kostant-Rosenberg Theorem
 provides a canonical identification 
 $$
 \Delta^*\Delta_*F \simeq F \otimes \Sym (\Omega_X[1]).
 $$ 
Setting $
M= F \otimes \Sym (\Omega_X[1])
$
with the $f$-equivariant structure $\beta \otimes f_*$ induced  by $\beta$ on $F$ and $f_*$ on $T_X$,
we obtain the traditional Atiyah-Bott Fixed Point Theorem
 $$
 \Tr_{ \Gamma(X^f, M\otimes \Sym (T_X[-1]) |_{X^f})}(\beta \otimes f_*|_{X^f})
 \simeq
 \Tr_{\Gamma(X, M)}((f, \beta)_*) \in k
 $$

\subsubsection{Lefschetz Fixed Point Theorem}
We will consider $\D$-modules though a similar story could be told for constructible sheaves.

All of the formal discussion of the previous section goes through with $\D$-modules in place of coherent sheaves,
assuming we fix constructibility assumptions so as to obtain a 2-dualizable category (the full category $\D(X)$ is smooth but 
not proper).

The result is the same formal identification
$$
 \Tr_{ p_*(F|_{X^f})}(\beta|_{X^f})
 \simeq
 \Tr_{p_*\Delta^*\Delta_*F}((f, \beta)_*)
 $$
 but now understood with $\D$-modules and $\D$-module functors. 
 
 For example, if $p:X\to \Spec k $ is a smooth and proper scheme (so not a more general
 stack), 
 then $X^f$ can be understood as the naive (underived) fixed points of $f$, and the composition $\Delta^*\Delta_*$ is the identity.
We obtain the traditional Lefschetz Fixed Point Theorem
 $$
 \Tr_{ \Gamma(X^f, F|_{X^f})}(\beta|_{X^f})
 \simeq
 \Tr_{\Gamma(X, F)}((f, \beta)_*) \in k
 $$

\subsection{Acknowledgements}

We are very grateful to Edward Frenkel for bringing to our attention
the question of an Atiyah-Bott Fixed Point Formula for stacks. He was
also instrumental in providing examples on which to test our
understanding. We are also grateful to Kevin Costello for a helpful
discussion on framings.


\section{Traces}

Our working setting is the higher category theory and algebra
developed by J.~Lurie~\cite{topos, HA, TFT, 2-cats}.

We will keep consistent notation and write  $\cC$ (resp. $\cA$) for a  symmetric
monoidal $(\oo, 1)$-category (resp. $(\oo, 2)$-category) with unit object $1_\cC$ (resp. $1_\cA$). 
We will
write $\Omega\cC= \End_\cC(1_\cC)$ (resp.   $\Omega\cA= \End_\cA(1_\cA)$) for the ``based loops" in $\cC$ (resp. $\cA$), or in other words,
the symmetric monoidal $\oo$-groupoid or equivalently space (resp. $(\oo, 1)$-category) of endomorphisms of
the monoidal unit $1_\cC$ (resp. $1_\cA$). 

\begin{example}[Algebras] Fix a symmetric monoidal $(\oo, 1)$-category $\cC$,
and let $\cA = \Alg(\cC)$ denote the Morita $(\oo,2)$-category of
algebras, bimodules, and intertwiners of bimodules within $\cC$.
The
forgetful map $\cA = \Alg(\cC) \to \cC$ is symmetric monoidal, and in
particular, the monoidal unit $1_\cA$ is the monoidal unit $1_\cC$
equipped with its natural algebra structure.
 Finally,
we have $\Omega\cA\simeq \cC$.  

For a specific example, one could take a commutative ring $k$ and $\cC =k\module$ the $(\oo,
1)$-category of complexes of $k$-modules. Then $\cA = \Alg(\cC)$ is the
$(\oo, 2)$-category of $k$-algebras, bimodules, and 
intertwiners of bimodules.
\end{example}

\begin{example}[Categories] A natural source of $(\oo,2)$-categories
is given by various theories of $(\oo,1)$-categories.  For example, for a commutative
ring $k$, one  could consider $\ul{\St}_k$, the $(\oo,2)$-category of
$k$-linear stable presentable $\oo$-categories, $k$-linear
continuous functors, and natural transformations.

Observe that $\Alg(k\module)$ is a full subcategory of $\ul{\St}_k$,
via the functor assigning to a $k$-algebra its stable presentable
$\oo$-category of modules. The essential image
consists of stable presentable categories admitting a  compact
generator.
\end{example}

\begin{remark}
Given a symmetric monoidal $(\oo, 1)$-category $\cC$, we can always
regard it as a symmetric monoidal $(\oo, 2)$-category $i(\cC)$ with
all $2$-morphisms invertible.

Conversely, given a symmetric monoidal $(\oo, 2)$-category $\cA$, we can always forget the non-invertible 2-morphisms
to obtain a symmetric monoidal $(\oo, 1)$-category $f(\cA)$. 

One can understand the above two operations as forming an adjoint pair $(i, f)$.
\end{remark}


\subsection{Dualizability}

\subsubsection{Dualizable objects}

\begin{defn}
(1)
An object $c$ of a symmetric monoidal $(\oo, 1)$-category $\cC$ is said to be {\em dualizable} if it admits a monoidal dual: there is a dual object $c^\vee \in \cC$
and evaluation and coevaluation morphisms
$$
\xymatrix{
\ev_c:c^\vee \otimes c\ar[r] & 1_\cC
&
\coev_c:1_\cC \ar[r] & c \otimes c^\vee
}$$
such that the usual compositions are  equivalent to the identity morphism
$$
\xymatrix{
c \ar[rr]^-{\coev_c \otimes \id_c} && c \otimes c^\vee \otimes c
\ar[rr]^-{ \id_c\otimes \ev_c} && c 
&
c^\vee \ar[rr]^-{ \id_{c^\vee} \otimes \coev_c} && c^\vee \otimes c \otimes c^\vee 
\ar[rr]^-{ \ev_c\otimes  \id_{c^\vee}} && c^\vee 
}
$$

(2)
An object $A$ of a symmetric monoidal $(\oo, 2)$-category $\cA$ is said to be {\em dualizable} if it is dualizable in the 
symmetric monoidal $(\oo, 1)$-category $f(\cA)$ obtained from $\cA$ by forgetting non-invertible 1-morphisms. In this case,
we will denote the dual object by $A^{op}\in \cA$.
\end{defn}

\begin{remark}[Duality and naiv\"et\'e in $\oo$-categories.]
It is a useful technical  observation that the notion of dualizability in the setting of $\oo$-categories is a ``naive" one: it
is a property of an object that can be checked in the underlying homotopy category. As a result, all of the 
categorical and 2-categorical calculations in this paper are similarly naive and explicit (and analogous to familiar unenriched categorical assertions), 
involving only small amounts of data that can
be checked by hand (rather than requiring higher coherences). 
We restrict ourselves only to assertions of this naive and accessible nature, specifying all maps that are needed rather than constructing
higher coherences (for which we view the cobordism hypothesis with singularities as the proper setting).
\end{remark}

\begin{example}
Any algebra object $A\in \Alg(\cC)$ is {dualizable} with dual the opposite algebra $A^{op}\in \Alg(\cC)$.
The evaluation morphism 
$$
\xymatrix{
\ev_A: A^{op} \otimes A \ar[r] & 1_\cC
}
$$
is given by $A$ itself regarded as an $A$-bimodule.
The coevaluation morphism 
$$
\xymatrix{
\coev_A:  1_\cC \ar[r] &A \otimes A^{op} 
}
$$
is also given by $A$ itself regarded as an $A$-bimodule.
\end{example}


\subsubsection{Dualizable morphisms}

Let us continue with $\cA$ a symmetric monoidal $(\oo, 2)$-category.
Consider two objects $A, B\in \cA$, and a morphism 
$$
\xymatrix{
\Phi:A \ar[r] & B.
}$$

\begin{example}
If $\cA = \Alg(\cC)$, then $\Phi$ is simply an $A^{op}\otimes B$-module.
\end{example}

If $B$ is dualizable with dual $B^{op}$, we can package $\Phi$ in the 
equivalent form
$$
\xymatrix{
\ev_\Phi = \ev_B\circ ( \id_{B^{op}}\otimes \Phi) :B^{op} \otimes A \ar[r] & 1_\cA
}
$$
If $A$ is dualizable with dual $A^{op}$, we can package $\Phi$ in the 
equivalent form
$$
\xymatrix{
\coev_\Phi = (\Phi\otimes \id_{A^{op}})\circ\coev_A:1_\cA \ar[r] & B \otimes A^{op}
}
$$

If both $A$ and $B$ are dualizble, 
we can also encode $\Phi$ by its dual morphism 
$$
\xymatrix{
\Phi^{op}:B^{op} \ar[rr]^-{\id_{B^{op}}\otimes \coev_A} && B^{op} \otimes A \otimes A^{op}
 \ar[rrr]^-{\id_{B^{op}}\otimes \Phi \otimes \id_{A^{op}}} &&&
  B^{op} \otimes B \otimes A^{op}
  \ar[rr]^-{\ev_B\otimes \id_{A^{op}}} && A^{op}
}
$$
which comes equipped with canonical equivalences
$$
\xymatrix{
(\Phi\otimes \id_{A^{op}})\circ\coev_A \simeq 
( \id_{B}\otimes \Phi^{op})\circ\coev_B : 1_\cA \ar[r] &  B \otimes A^{op}
}
$$
$$
\xymatrix{
\ev_B \circ ( \id_{B^{op}} \otimes \Phi)\simeq
\ev_A \circ (\Phi^{op} \otimes \id_A):  B^{op}\otimes A  \ar[r] &  1_\cA
}
$$

\begin{defn}
Let $A, B$ be objects of a symmetric monoidal $(\oo, 2)$-category $\cA$.

(1) A morphism $\Phi: A\to B$ is said to be {\em left dualizable}
 if it admits a left adjoint: there is a morphism $\Phi^\ell: B\to A$
and unit and counit morphisms
$$
\xymatrix{
\eta_ \Phi: \id_B \ar[r] & \Phi \circ \Phi ^\ell
&
\eps_\Phi : \Phi ^\ell \circ \Phi\ar[r] & \id_A
}$$
such that the usual compositions are  equivalent to the identity morphism
$$
\xymatrix{
\Phi^\ell \ar[rr]^-{ \id \otimes \eta_\Phi } && \Phi^\ell \circ\Phi \circ \Phi^\ell
\ar[rr]^-{\eps_ \Phi \otimes \id} && \Phi^\ell 
}$$
$$
\xymatrix{
\Phi \ar[rr]^-{\eta_\Phi  \otimes \id} &&  \Phi \circ \Phi^\ell \circ \Phi
\ar[rr]^-{\id \otimes \eps_ \Phi} && \Phi
}$$

(2) A morphism $\Phi: A\to B$ is said to be {\em right dualizable}
if it admits a right adjoint: there is a morphism $\Phi^r: B\to A$
and unit and counit morphisms
$$
\xymatrix{
\eta_ \Phi: \id_A \ar[r] & \Phi^r \circ \Phi 
&
\eps_\Phi : \Phi  \circ \Phi^r\ar[r] & \id_B
}$$
such that the usual compositions are  equivalent to the identity morphism
$$
\xymatrix{
\Phi^r \ar[rr]^-{ \eta_\Phi  \otimes \id} && \Phi^r \circ\Phi \circ \Phi^r
\ar[rr]^-{ \id\otimes\eps_ \Phi} && \Phi^r
}$$
$$
\xymatrix{
\Phi \ar[rr]^-{ \id \otimes\eta_\Phi } &&  \Phi\circ \Phi^r\circ \Phi
\ar[rr]^-{ \eps_ \Phi\otimes\id} && \Phi
}
$$
\end{defn}

%

\begin{remark}
Suppose $A, B\in \cA$ are dualizable. 

If $\Phi: A\to B$ is left  dualizable, then $\Phi^{op}: B^{op}\to A^{op}$ is right  dualizable with right adjoint 
$(\Phi^\ell)^{op}:A^{op}\to B^{op}$.
In other words, we have equivalent
adjoint pairs
$$
\xymatrix{
\Phi^\ell: B \ar@<0.7ex>[r] 
&
\ar@<0.7ex>[l]  A: \Phi
&
\Phi^{op}: B^{op} \ar@<0.7ex>[r] 
&
\ar@<0.7ex>[l]  A^{op}: (\Phi^\ell)^{op}
}
$$

Similarly,  if
$\Phi: A\to B$ is right  dualizable, then $\Phi^{op}: B^{op}\to A^{op}$ is left  dualizable with left adjoint 
$(\Phi^r)^{op}:A^{op}\to B^{op}$.
In other words, we have equivalent
adjoint pairs
$$
\xymatrix{
\Phi: A \ar@<0.7ex>[r] 
&
\ar@<0.7ex>[l]  B: \Phi^r
&
(\Phi^r)^{op}: A^{op} \ar@<0.7ex>[r] 
&
\ar@<0.7ex>[l]  B^{op}: \Phi^{op}
}
$$

\end{remark}

%


\subsubsection{2-dualizable objects}

\begin{defn}
An object $A$ of a symmetric monoidal $(\oo, 2)$-category $\cA$ is {\em 2-dualizable} (or fully dualizable) if it is dualizable
and its evaluation morphism
$$
\xymatrix{
\ev_A: A^{op} \otimes A \ar[r] & 1_\cA
}
$$
admits both a left adjoint and right adjoint. In this case, we will denote the left adjoint by $L_A$ and the right adjoint
by $R_A$.
\end{defn}

\begin{remark}
Here are the unit and counit compositions that
are equivalent to the identity
$$
\xymatrix{
L_A \ar[rr]^-{ \id \otimes \eta} && L_A \circ\ev_A \circ L_A
\ar[rr]^-{\eps \otimes \id} && L_A 
}$$
$$
\xymatrix{
\ev_A \ar[rr]^-{\eta \otimes \id} &&  \ev_A \circ L_A\circ \ev_A
\ar[rr]^-{\id \otimes \eps} && \ev_A 
}$$
$$
\xymatrix{
R_A \ar[rr]^-{ \eta \otimes \id} && R_A \circ\ev_A \circ R_A
\ar[rr]^-{ \id\otimes\eps} && R_A 
}$$
$$
\xymatrix{
\ev_A \ar[rr]^-{ \id \otimes\eta} &&  \ev_A \circ R_A\circ \ev_A
\ar[rr]^-{ \eps\otimes\id} && \ev_A 
}
$$
\end{remark}

\begin{remark}[Serre equivalence]
If  $A\in \cA$ is {2-dualizable}, we can package the left and right adjoints 
$$
\xymatrix{
L_A, R_A:1_\cA \ar[r] & A^{op} \otimes A
\simeq
A \otimes A^{op} 
}$$
in the equivalent form of endomorphisms $\ell_A, r_A: A\to  A$ so that
$$
\xymatrix{
L_A \simeq (\ell_A \otimes \id_{A^{op}})\circ \coev_A
&
R_A\simeq  (r_A \otimes \id_{A^{op}})\circ \coev_A
}
$$

It is straightforward to check that $\ell_A$ and $r_A$ are mutual inverse equivalences.
One often refers to $r_A$ as the {\em Serre equivalence} of $A$, and consequently $\ell_A$ as the {\em  inverse Serre equivalence}. One says that $A$ is {\em Calabi-Yau} if $r_A$, and hence $\ell_A$, is identified with the identity $\id_A$.
\end{remark}

\begin{remark}\label{coevad}
Alternatively, if $A\in \cA$ is 2-dualizable, we can consider the coevaluation morphism
$$
\xymatrix{
\coev_A:  1_\cA \ar[r] &A \otimes A^{op} 
}
$$
with its left and right adjoints
$$
\xymatrix{
L'_A,
R'_A: A\otimes A^{op} \simeq A^{op} \otimes A\ar[r] & 1_\cA
}
$$
given by the compositions
$$
\xymatrix{
L'_A = \ev_A\circ (r_A \otimes \id_{A^{op}})
&
R'_A = \ev_A\circ  (\ell_A \otimes \id_{A^{op}})
}
$$

Observe that there are canonical identifications
$$
\xymatrix{
R_A' \circ \coev_A\simeq \ev_A\circ L_A 
&
L_A' \circ \coev_A\simeq \ev_A\circ R_A 
}
$$
compatible with unit respectively counit morphisms, as well as 
 canonical identifications
$$
\xymatrix{
 \coev_A \circ R_A' \simeq (r_A\otimes\id_{A^{op}})\circ  L_A\circ \ev_A \circ  (\ell_A\otimes\id_{A^{op}})
}
$$
$$
\xymatrix{
L_A' \circ \coev_A \simeq (r_A \otimes\id_{A^{op}})\circ \ev_A\circ R_A \circ(\ell_A \otimes\id_{A^{op}})
}
$$
compatible with counit respectively unit morphisms.
\end{remark}

\begin{remark}\label{pairad}
Suppose $A\in \cA$ is 2-dualizable with respective left and right adjoints $L_A, R_A:1_\cA\to A^{op}\otimes A$ 
to the evaluation morphism $\ev_A:A^{op} \otimes A\to 1_\cA$.

Suppose $\Phi:A\to A$ is a left dualizable endomorphism.
Then
the pairing $\ev_\Phi: A^{op} \otimes A \to 1_\cA$ admits the left adjoint
$$
\xymatrix{
L_\Phi \simeq  (\Phi^\ell\otimes \id_{A^{op}})\circ L_A :1_\cA\ar[r] & A^{op} \otimes A
}
$$
since we simply have a composition of given adjoint pairs
$$
\xymatrix{
L_\Phi:1_\cA\ar@<0.7ex>[rr]^-{L_A} &&\ar@<0.7ex>[ll]^-{\ev_A}  A^{op} \otimes A \ar@<0.7ex>[rr]^-{\Phi^\ell\otimes\id_{A^{op}}} &&
\ar@<0.7ex>[ll]^-{\Phi\otimes\id_{A^{op}}} A^{op} \otimes A:\ev_\Phi
}
$$

Similarly, the pairing $\ev_{\Phi^\ell}: A^{op} \otimes A \to 1_\cA$ admits the right adjoint
$$
\xymatrix{
R_{\Phi^\ell} \simeq  (\Phi\otimes \id_{A^{op}}) \circ R_A :1_\cA\ar[r] & A^{op} \otimes A
}
$$
In other words, if $\Phi:A\to A$ is a right dualizable endomorphism, then
 the pairing $\ev_{\Phi}: A^{op} \otimes A \to 1_\cA$ admits the right adjoint
$$
\xymatrix{
R_{\Phi} \simeq  (\Phi^r\otimes \id_{A^{op}}) \circ R_A :1_\cA\ar[r] & A^{op} \otimes A
}
$$
\end{remark}


\subsection{Primary traces}


\begin{defn}\label{trace1}
Let $A$ be a dualizable object of a symmetric monoidal $(\oo, 1)$-category $\cC$.

The {\em trace} of an endomorphism $\Phi: A\to A$ is the element
$$
\Tr(\Phi)  = \Tr_A(\Phi) = \ev_A\circ (\Phi\otimes \id_{A^{\vee}}) \circ \coev_A \in \Omega\cC = \End_{\cC}(1_\cC)
$$

\end{defn}

\begin{remark}

We can visualize $\Tr(\Phi)$ by the picture
$$
\xymatrix{
 \ar@/_2pc/@{-}[d] A \ar[r]^-{\Phi} &A \ar@/^2pc/@{-}[d] \\
A^{op} \ar[r] &A^{op}\\
}
$$
where the arcs denote the evaluation and coevaluation morphisms, and the unlabeled arrow denotes the identity. 

\end{remark}

\begin{example}
(1)
When $\Phi = \id_A$, the trace $\Tr(\id_A)$ is called the {\em Hochschild homology} of $A$.

(2)
When $A$ is a 2-dualizable object of a symmetric monoidal $(\oo, 2)$-category $\cA$,  the trace $\Tr(\ell_A)$ of the inverse Serre equivalence
is called the {\em Hochschild cohomology} of $A$.

(3)
As a special instance of Proposition~\ref{cylinders} below, the trace 
$\Tr(r_A)$ of the Serre equivalence is dualizable with dual $\Tr(r_A)^\vee \simeq \Tr(\ell_A)$.
\end{example}

\begin{example}
When $A = 1_\cC$ is the monoidal unit, and $\Phi:1_\cC \to 1_\cC$ is an endomorphism,
we have an evident equivalence $\Tr(\Phi) \simeq \Phi$ of endomorphisms of $1_\cC$.
\end{example}

\subsubsection{Cyclic symmetry}

The key structure of the trace is its cyclic symmetry whose most basic implication is the following.

\begin{defn}
Let $A, B$ be dualizable objects of a symmetric monoidal $(\oo, 1)$-category $\cC$.

Given a diagram of morphisms
$$\xymatrix{A  \ar@<+.5ex>[r]^\Phi &
  \ar@<+.5ex>[l]^{\Psi}B}$$ 
the {\em cyclic symmetry} is the equivalence
$$
\xymatrix{
m(\Phi, \Psi):\Tr(\Phi\circ \Psi) \ar[r]^-\sim & \Tr(\Psi \circ \Phi)
}
$$
given by the composition of duality equivalences
$$
\xymatrix{
 \ev_B\circ (( \Phi\circ \Psi) \otimes \id_B^{op}) \circ \coev_B\ar[r]^-\sim &
   \ev_A\circ (\Psi\otimes \Phi^{op}) \circ \coev_B\ar[r]^-\sim &
\ev_A\circ (( \Psi\circ \Phi) \otimes \id_A^{op}) \circ \coev_A
}
$$

\end{defn}

The cyclic symmetry is evidently functorial in $\Phi, \Psi$.

\begin{remark}

We can visualize $m(\Phi, \Psi)$ by the sequence of equivalent pictures
$$
\xymatrix{
 \ar@/_2pc/@{-}[d] B \ar[r]^-{\Psi} & A \ar[r]^-{\Phi} &  B \ar@/^2pc/@{-}[d] \\
B^{op} \ar[rr]&& B^{op}\\
}
$$
$$
\xymatrix{
 \ar@/_2pc/@{-}[d] B \ar[r]^-{\Psi} &  A \ar@/^2pc/@{-}[d] \\
B^{op} \ar[r]^-{\Phi^{op}}& A^{op}\\
}
$$
$$
\xymatrix{
 \ar@/_2pc/@{-}[d] A \ar[r]^-{\Phi} & A \ar[r]^-{\Psi} &  A \ar@/^2pc/@{-}[d] \\
A^{op} \ar[rr]&& A^{op}\\
}
$$

\end{remark}

\begin{remark}
We will be primarily interested in the  case $A=B$.
\end{remark}

\begin{example}
Set $A= B$.

Taking $\Phi= \id_A$ yields a canonical equivalence 
$$
\xymatrix{
\gamma_\Psi: \id_{\Tr(\Psi)} \ar[r]^-\sim & m( \id_A, \Psi) 
}
$$
and likewise, taking
$\Psi= \id_A$ yields a canonical equivalence 
$$
\xymatrix{
\gamma_\Phi: \id_{\Tr(\Phi)} \ar[r]^-\sim & m( \Phi, \id_A) 
}
$$

Thus taking $\Phi= \Psi =\id_A$ yields an automorphism of the identity of the Hochschild homology
$$
\xymatrix{
 (\gamma_\Psi)^{-1}\circ \gamma_\Phi:\id_{\Tr(\id_A)} \ar[r]^-\sim & \id_{\Tr(\id_A)}
}
$$
called the {\em BV homotopy}.
\end{example}


\subsubsection{Functoriality of trace}

\begin{defn}
Let $A, B$ be objects of an $(\oo, 1)$-category $\cC$. 

Given endomorphisms $\Phi:A\to A$,  $\Phi': B\to B$, and a morphism $\Psi:A\to B$,
 {\em a commuting transformation} is a morphism
$$
\xymatrix{
\alpha: \Psi \circ \Phi \ar[r] &\Phi'\circ \Psi 
}
$$
\end{defn}

\begin{remark}
One can view the pair $(\Psi, \alpha)$ as a morphism $(A, \Phi) \to (B, \Phi')$ in the (unbased)  loop category of $\cC$.
\end{remark}

\begin{defn}\label{def:trace funct}
Let $A, B$ be dualizable objects of a symmetric monoidal $(\oo, 1)$-category $\cC$. 

Given
endomorphisms 
$\Phi:A\to A$, $\Phi': B\to B$, a right dualizable morphism $\Psi:A\to B$,
and a commuting transformation 
$$
\xymatrix{
\alpha:\Psi \circ \Phi \ar[r] & \Phi'\circ \Psi
}
$$
the induced {\em trace map} is the morphism
$$
\xymatrix{
\varphi(\Psi, \alpha):\Tr(\Phi) \ar[r] &  \Tr(\Phi')
}
$$
given by the composition of the following morphisms.

First, we use the unit of duality
$$
\xymatrix{
\Tr(\Phi) \ar[r]^-{\eta_{\Psi}} &
\Tr(\Psi^r \circ \Psi\circ \Phi)
}
$$
Second, we use the commuting transformation
$$
\xymatrix{
\Tr(\Psi^r \circ \Psi \circ \Phi) 
\ar[r]^-{\alpha} &
\Tr(\Psi^r\circ \Phi' \circ \Psi) 
}
$$
Third we use the cyclic automorphism of the trace
$$
\xymatrix{
\Tr(\Psi^r\circ \Phi' \circ \Psi)
\ar[rr]^-{m(\Psi^r, \Phi'\circ  \Psi)} &&
\Tr(  \Phi' \circ\Psi\circ \Psi^r) 
}
$$
Finally, we 
 use the counit of duality 
$$
\xymatrix{
\Tr( \Phi' \circ\Psi\circ \Psi^r) 
 \ar[r]^-{\eps_{\Psi}} &
\Tr(\Phi')
}
$$
\end{defn}

\begin{remark}

We can visualize $\varphi(\Psi, \alpha)$ by the sequence of pictures
$$
\xymatrix{
 \ar@/_2pc/@{-}[d] A \ar[r]^-{\Phi} &  A \ar@/^2pc/@{-}[d] \\
A^{op} \ar[r] & A^{op}\\
}
$$
$$
\xymatrix{
 \ar@/_2pc/@{-}[d] A \ar[r]^-{\Phi} & A \ar[r]^-{\Psi} & B  \ar[r]^-{\Psi^r} &  A \ar@/^2pc/@{-}[d] \\
A^{op} \ar[rrr]&&& A^{op}\\
}
$$
$$
\xymatrix{
 \ar@/_2pc/@{-}[d] A \ar[r]^-{\Psi} & B \ar[r]^-{\Phi'} & B  \ar[r]^-{\Psi^r} &  A \ar@/^2pc/@{-}[d] \\
A^{op} \ar[rrr]&&& A^{op}\\
}
$$
$$
\xymatrix{
 \ar@/_2pc/@{-}[d] B \ar[r]^-{\Psi^r} & A \ar[r]^-{\Psi} & B  \ar[r]^-{\Phi'} &  B \ar@/^2pc/@{-}[d] \\
B^{op} \ar[rrr]&&& B^{op}\\
}
$$
$$
\xymatrix{
 \ar@/_2pc/@{-}[d] B \ar[r]^-{\Phi'} &  B \ar@/^2pc/@{-}[d] \\
B^{op} \ar[r] & B^{op}\\
}
$$

\end{remark}

\begin{remark}
We will be primarily interested in the  case $A=B$ with $\Phi = \Phi'$.
\end{remark}

\begin{remark}\label{rem:alt trace funct}
Continuing with the setup of the  definition, we also have the induced morphism
$$
\xymatrix{
\beta: \Phi\circ \Psi^r\ar[r]^-{\eta_\Psi} & \Psi^r \circ \Psi\circ \Phi\circ \Psi^r\ar[r]^-\alpha 
& \Psi^r \circ \Phi'\circ \Psi\circ \Psi^r\ar[r]^-{\epsilon_\Psi} & \Psi^r \circ \Phi'
}
$$
The morphism $\varphi(\Psi, \alpha)$ is alternatively 
given by the composition of the following morphisms.

First, we use the unit of duality
$$
\xymatrix{
\Tr(\Phi) \ar[r]^-{\eta_{\Psi}} &
\Tr(\Phi\circ \Psi^r \circ \Psi)
}
$$
Second, we use the  morphism induced by the commuting transformation
$$
\xymatrix{
\Tr(\Phi\circ \Psi^r \circ \Psi) 
\ar[r]^-{\beta} &
\Tr(\Psi^r\circ \Phi' \circ \Psi) 
}
$$
Third we use the cyclic automorphism of the trace
$$
\xymatrix{
\Tr(\Psi^r\circ \Phi' \circ \Psi)
\ar[rr]^-{m(\Psi^r, \Phi'\circ  \Psi)} &&
\Tr(  \Phi' \circ\Psi\circ \Psi^r) 
}
$$
Finally, we 
 use the counit of duality 
$$
\xymatrix{
\Tr( \Phi' \circ\Psi\circ \Psi^r) 
 \ar[r]^-{\eps_{\Psi}} &
\Tr(\Phi')
}
$$

We can visualize this alternative presentation by the sequence of pictures
$$
\xymatrix{
 \ar@/_2pc/@{-}[d] A \ar[r]^-{\Phi} &  A \ar@/^2pc/@{-}[d] \\
A^{op} \ar[r] & A^{op}\\
}
$$
$$
\xymatrix{
 \ar@/_2pc/@{-}[d] A \ar[r]^-{\Psi} & B \ar[r]^-{\Psi^r} & A  \ar[r]^-{\Phi} &  A \ar@/^2pc/@{-}[d] \\
A^{op} \ar[rrr]&&& A^{op}\\
}
$$
$$
\xymatrix{
 \ar@/_2pc/@{-}[d] A \ar[r]^-{\Psi} & B \ar[r]^-{\Phi'} & B  \ar[r]^-{\Psi^r} &  A \ar@/^2pc/@{-}[d] \\
A^{op} \ar[rrr]&&& A^{op}\\
}
$$
$$
\xymatrix{
 \ar@/_2pc/@{-}[d] B \ar[r]^-{\Psi^r} & A \ar[r]^-{\Psi} & B  \ar[r]^-{\Phi'} &  B \ar@/^2pc/@{-}[d] \\
B^{op} \ar[rrr]&&& B^{op}\\
}
$$
$$
\xymatrix{
 \ar@/_2pc/@{-}[d] B \ar[r]^-{\Phi'} &  B \ar@/^2pc/@{-}[d] \\
B^{op} \ar[r] & B^{op}\\
}
$$

To see that the above composition is naturally equivalent to 
$
\varphi(\Psi, \alpha)
$ 
is elementary and uses nothing more than basic  dualizability identities.
\end{remark}

%
%


\subsubsection{Dualizability of trace}

\begin{prop}\label{cylinders}
Let $A$ be a 2-dualizable object of a symmetric monoidal $(\oo, 2)$-category $\cA$.

Let $\Phi:A\to A$ be a right dualizable endomorphism with right  adjoint $\Phi^r:A\to A$.

Then  $\Tr(\Phi) \in \Omega \cA$ is a dualizable object with dual $\Tr(\Phi)^\vee \simeq \Tr(\Phi^r) \in \Omega \cA$.

\end{prop}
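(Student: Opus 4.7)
The approach is to show that $\Tr(\Phi)$, viewed as a 1-endomorphism of $1_\cA$ inside $\cA$, admits a right adjoint whose value is $\Tr(\Phi^r)$. Since the monoidal product on $\Omega\cA=\End_\cA(1_\cA)$ agrees, up to the symmetry isomorphism supplied by Eckmann--Hilton, with composition of 1-morphisms in $\cA$, the unit and counit of such an adjunction become a coevaluation $1_{\Omega\cA}\to \Tr(\Phi)\otimes\Tr(\Phi^r)$ and an evaluation $\Tr(\Phi^r)\otimes\Tr(\Phi)\to 1_{\Omega\cA}$ in $\Omega\cA$, with triangle identities inherited from those of the underlying adjunction.

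The first step is to assemble this right adjoint by composing right adjoints of the constituents of the defining composite $\Tr(\Phi)=\ev_A\circ(\Phi\otimes \id_{A^{op}})\circ\coev_A$. The 2-dualizability of $A$ furnishes a right adjoint $R_A$ to $\ev_A$ and, via Remark~\ref{coevad}, a right adjoint $R_A'$ to $\coev_A$; the right dualizability of $\Phi$ gives $\Phi\otimes \id_{A^{op}}$ the right adjoint $\Phi^r\otimes \id_{A^{op}}$. Composing in reversed order produces
\[
\Tr(\Phi)^r \;\simeq\; R_A'\circ(\Phi^r\otimes \id_{A^{op}})\circ R_A.
\]
Substituting the Serre-equivalence expressions $R_A\simeq(r_A\otimes \id_{A^{op}})\circ\coev_A$ and $R_A'\simeq\ev_A\circ(\ell_A\otimes \id_{A^{op}})$ rewrites this as $\Tr(\ell_A\circ\Phi^r\circ r_A)$. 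Applying the cyclic symmetry $m(\ell_A,\Phi^r\circ r_A)$ identifies this with $\Tr(\Phi^r\circ r_A\circ\ell_A)$, and the equivalence $r_A\circ\ell_A\simeq\id_A$ then collapses the result to $\Tr(\Phi^r)$.

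The genuine content of the argument sits in this final identification: the naively composed right adjoint is a priori decorated with Serre twists $\ell_A$ and $r_A$ arising from the non-trivial adjoints of $\ev_A$ and $\coev_A$, and the cancellation of these twists relies precisely on cyclic symmetry together with the mutual inverseness $r_A\circ\ell_A\simeq\id_A$. The remainder of the proof---verifying that the composed unit and counit satisfy the zigzag identities, and reinterpreting them as duality data in $\Omega\cA$ via the Eckmann--Hilton coincidence of composition and tensor---is routine adjunction bookkeeping, and fits squarely within the ``naive'' scope emphasized in the remark on dualizability in $\oo$-categories.
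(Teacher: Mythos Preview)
Your proof is correct and follows essentially the same route as the paper's first proof: decompose $\Tr(\Phi)$ as a composite of right dualizable 1-morphisms, take the composite right adjoint, and identify the result with $\Tr(\Phi^r)$ after cancelling the Serre twists via cyclic symmetry. The paper groups the factors into two pieces ($\ev_\Phi$ and $\coev_A$) rather than your three, but this is only a cosmetic regrouping; the paper also records a second, dual variant using left adjoints, which furnishes the evaluation map directly and is used later in the proof of the main theorem.
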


\begin{proof}
For future reference to the unit and counit morphisms of the duality, we give two equivalent dual proofs.

(1) Recall from Remark~\ref{coevad}
that the coevaluation morphism
$$
\xymatrix{
\coev_A:  1_\cA \ar[r] &A \otimes A^{op} 
}
$$
admits the right adjoint
$$
\xymatrix{
R'_A\simeq \ev_A\circ (\ell_A \otimes \id_{A^{op}}) : A^{op} \otimes A\ar[r] & 1_\cA
}
$$

Recall from Remark~\ref{pairad} that if $\Phi$ is right dualizable, then
the pairing $\ev_{\Phi}: A^{op} \otimes A \to 1_\cA$ admits the right adjoint
$$
\xymatrix{
R_{\Phi} \simeq  (\Phi^r\otimes \id_{A^{op}})\circ R_A :1_\cA\ar[r] & A^{op} \otimes A
}
$$

Composing adjoint pairs, we arrive at the adjoint pair 
$$
\xymatrix{
\ev_{\Phi}\circ \coev_A: 1_\cA \ar@<0.7ex>[r] &  \ar@<0.7ex>[l]  1_\cA:R_A' \circ R_{\Phi}
}
$$
Calculating the right hand side, we arrive at the dual pair
$$
\xymatrix{
\Tr(\Phi) = \ev_{\Phi}\circ \coev_A
& \Tr(\Phi^r)\simeq \Tr(\ell_A\circ \Phi^r \circ r_A) \simeq R_A' \circ R_{\Phi}
}$$

Finally, if $\Phi$ is left dualizable, apply the preceding argument starting with the right dualizable morphism $\Phi^\ell:A\to A$.

(2)
Recall from Remark~\ref{coevad}
that the coevaluation morphism
$$
\xymatrix{
\coev_A:  1_\cA \ar[r] &A \otimes A^{op} 
}
$$
admits the left adjoint
$$
\xymatrix{
L'_A\simeq \ev_A\circ (r_A \otimes \id_{A^{op}}) : A^{op} \otimes A\ar[r] & 1_\cA
}
$$

Recall from Remark~\ref{pairad} that 
the pairing $\ev_{\Phi^r}: A^{op} \otimes A \to 1_\cA$ admits the left adjoint
$$
\xymatrix{
L_{\Phi^r} \simeq  (\Phi\otimes \id_{A^{op}})\circ L_A :1_\cA\ar[r] & A^{op} \otimes A
}
$$

Composing adjoint pairs, we arrive at the adjoint pair 
$$
\xymatrix{
L'_A \circ L_{\Phi^r}: 1_\cA \ar@<0.7ex>[r] &  \ar@<0.7ex>[l]  1_\cA:\ev_{\Phi^r}\circ \coev_A
}
$$
Calculating the left hand side, we arrive at the dual pair
$$
\xymatrix{
\Tr(\Phi) \simeq \Tr(r_A\circ \Phi\circ \ell_A) \simeq L'_A \circ L_\Phi & \Tr(\Phi)=\ev_{\Phi^r}\circ \coev_A.
}$$
\end{proof}

\begin{remark}
The preceding proposition simply involves compositions of adjoint pairs.
The first proof  exhibits the coevaluation map of the duality as the composition 
of the units of the given adjunctions, 
while the second exhibits the evaluation map as
 the composition 
of the counits of the given adjunctions.
\end{remark}

\begin{remark}
For a left dualizable endomorphism $\Phi:A\to A$, we also have $\Tr(\Phi)^\vee\simeq \Tr(\Phi^\ell)$ by applying the proposition
to the right dualizable endmorphism $\Phi^\ell$.
\end{remark}

\begin{remark}\label{rem: trace funct + dual}
Let us combine  the functoriality and dualizability of traces. 

First, returning to the setup of Definition~\ref{def:trace funct}, let $A, B$ be dualizable objects of a symmetric monoidal $(\oo, 1)$-category $\cC$. 
Suppose given
endomorphisms 
$\Phi:A\to A$, $\Phi': B\to B$, a right dualizable morphism $\Psi:A\to B$,
and a commuting transformation 
$$
\xymatrix{
\alpha:\Psi \circ \Phi \ar[r] & \Phi'\circ \Psi
}
$$
so that we have
the trace map
$$
\xymatrix{
\varphi(\Psi, \alpha):\Tr(\Phi) \ar[r] &  \Tr(\Phi')
}
$$

Now assume in addition that $\Phi, \Phi'$ are themselves right dualizable. Then we have the induced commuting  transformation
$$
\xymatrix{
\alpha^r:\Psi^r \circ \Phi'^r  \ar[r] &  \Phi^r\circ \Psi^r
}
$$
and the corresponding trace map
$$
\xymatrix{
\varphi(\Psi^r, \alpha^r):\Tr(\Phi'^r) \ar[r] &  \Tr(\Phi^r)
}
$$

It is straightforward to check that under the identifications $\Tr(\Phi^r) \simeq \Tr(\Phi)^\vee$, $\Tr(\Phi'^r) \simeq \Tr(\Phi')^\vee$, there is a canonical equivalence of morphisms
$$ 
\xymatrix{
\varphi(\Psi^r, \alpha^r) \simeq \varphi(\Psi, \alpha)^\vee
}$$
\end{remark}



\subsection{Secondary traces}

Now we can state our main result. Its proof will occupy Section~\ref{factorsect} below.

Let $A$ be a 2-dualizable object of a symmetric monoidal $(\oo, 2)$-category $\cA$.

Suppose given right dualizable endomorphisms
$$
\xymatrix{
\Phi_a,\Phi_b: A\ar[r] & A
}
$$
and a commuting transformation 
$$
\xymatrix{
\alpha_{a, b}:\Phi_a\circ \Phi_b \ar[r] & \Phi_b\circ \Phi_a
}
$$

Then on the one hand, as in Definition~\ref{def:trace funct}, we have the  trace map
$$
\xymatrix{
\varphi(\Phi_a, \alpha_{a, b}):\Tr(\Phi_b) \ar[r] &  \Tr(\Phi_b)
}
$$
Since $\Tr(\Phi_b)$ is dualizable, we can form the iterated trace
$$
\Tr_{\Tr(\Phi_b)}(\varphi(\Phi_a, \alpha_{a, b}))   
\in \Omega\Omega\cA = \End_{\Omega \cA}(1_{\Omega\cA})
$$

On the other hand, as in Remark~\ref{rem:alt trace funct}, we can consider the induced  transformation
$$
\xymatrix{
\alpha_{b, a^r} :\Phi_b\circ \Phi_a^r \ar[r]^-{\eta_{\Phi_a}} & 
\Phi_a^r\circ \Phi_a\circ \Phi_b\circ \Phi_a^r \ar[r]^-{\alpha_{a, b}} & 
\Phi_a^r\circ \Phi_b\circ \Phi_a\circ \Phi_a^r \ar[r]^-{\epsilon_{\Phi_a}} & 
\Phi_a^r\circ \Phi_b
}
$$
the corresponding trace map
$$
\xymatrix{
\varphi(\Phi_b, \alpha_{b, a^r}):\Tr(\Phi_a^r) \ar[r] &  \Tr(\Phi_a^r)
}
$$
and
since $\Tr(\Phi_a^r)$ is dualizable, form the iterated trace
$$
\Tr_{\Tr(\Phi_a^r)}(\varphi(\Phi_b, \alpha_{b,a^r}))   
\in \Omega\Omega\cA = \End_{\Omega \cA}(1_{\Omega\cA})
$$

\begin{thm}\label{mainthm}
Let $A$ be a 2-dualizable object of a symmetric monoidal $(\oo, 2)$-category $\cA$.

Suppose given right dualizable endomorphisms
$$
\xymatrix{
\Phi_a,\Phi_b: A\ar[r] & A
}
$$
and a commuting transformation 
$$
\xymatrix{
\alpha_{a, b}:\Phi_a\circ \Phi_b \ar[r] & \Phi_b\circ \Phi_a
}
$$
there is a canonical equivalence
$$
\Tr_{\Tr(\Phi_b)}(\varphi(\Phi_a, \alpha_{a,b}))  \simeq 
\Tr_{\Tr(\Phi_a^r)}(\varphi(\Phi_b, \alpha_{b,a^r}))   
\in \Omega\Omega\cA = \End_{\Omega \cA}(1_{\Omega\cA})
$$
\end{thm}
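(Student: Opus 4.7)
The strategy is to unfold each iterated trace into an explicit composite 2-morphism in $\cA$, recognize both composites as invariants attached to the same decorated picture on the 2-torus $T^2$ (with two meridians marked by $\Phi_a, \Phi_b$ and their intersection marked by $\alpha_{a,b}$), and then connect the two decompositions through Morse theory on $T^2$. The two sides of the claimed equivalence correspond to two natural Morse functions on $T^2$, one sweeping in the direction of the $a$-meridian and one in the direction of the $b$-meridian; the equivalence is produced by interpolating between these Morse functions through standard critical-point moves.

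First I would fully expand $\Tr_{\Tr(\Phi_b)}(\varphi(\Phi_a, \alpha_{a,b}))$. The outer trace, via Definition \ref{trace1} together with Proposition \ref{cylinders}, is built from $\coev_{\Tr(\Phi_b)}$ and $\ev_{\Tr(\Phi_b)}$, which the two dual proofs of the proposition realize as composites of $\coev_A, \ev_A, R_A, R_{\Phi_b}, L_A, L_{\Phi_b}$ together with the Serre equivalences $r_A, \ell_A$ recorded in Remark \ref{coevad} and Remark \ref{pairad}. The inner morphism $\varphi(\Phi_a, \alpha_{a,b})$, via Definition \ref{def:trace funct}, contributes the unit $\eta_{\Phi_a}$, the commuting transformation $\alpha_{a,b}$, a cyclic reshuffling $m$, and the counit $\epsilon_{\Phi_a}$ of the adjunction $\Phi_a \dashv \Phi_a^r$. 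The resulting composite has a definite list of elementary pieces.

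Next I would carry out the analogous expansion for $\Tr_{\Tr(\Phi_a^r)}(\varphi(\Phi_b, \alpha_{b,a^r}))$ using the alternative presentation of Remark \ref{rem:alt trace funct}. After substituting the definition of $\alpha_{b,a^r}$ in terms of $\alpha_{a,b}, \eta_{\Phi_a}, \epsilon_{\Phi_a}$, the same list of elementary pieces reappears with the same multiplicities: four cups and caps (copies of $\coev_A, \ev_A$), the Serre equivalences $r_A, \ell_A$, a single $\eta_{\Phi_a}$ and $\epsilon_{\Phi_a}$, one instance of $\alpha_{a,b}$, and cyclic reshufflings. What differs is the order in which these pieces are glued, reflecting the alternative height function on $T^2$.

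The core of the argument, and its main obstacle, is to exhibit a finite sequence of elementary moves converting one composite into the other. Each move should be justified by an identity already at hand: the triangle identities for the adjunctions $\ev_A \dashv R_A$, $L_A \dashv \ev_A$, $\coev_A \dashv R_A'$, $L_A' \dashv \coev_A$, and $\Phi_a \dashv \Phi_a^r$; naturality of the cyclic symmetry $m$ from Remark \ref{rem:alt trace funct}; and the compatibilities between $\coev_A, \ev_A$ and the Serre equivalences $r_A, \ell_A$ recorded in Remark \ref{coevad}. Morse-theoretically, each such move corresponds to a birth/death cancellation, a saddle exchange, or a slide of a marked feature across a critical point of an interpolating family of Morse functions on $T^2$. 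The bookkeeping is where the real work lies: one must track how the Serre equivalences introduced by the two outer traces cancel, and how $\eta_{\Phi_a}$ and $\epsilon_{\Phi_a}$ are relocated under cyclic rotation so that the remaining $\alpha_{a,b}$ ends up in the same configuration on both sides. Because dualizability in the $\infty$-categorical setting is a naive property, as emphasized in the introduction, it suffices to exhibit this finite sequence of moves; no higher coherences beyond the given adjunction data are required.
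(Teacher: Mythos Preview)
Your plan is correct in spirit and identifies exactly the right ingredients: both iterated traces unfold into composites built from the same elementary pieces (units/counits of the adjunctions $L_A\dashv\ev_A\dashv R_A$, $L_A'\dashv\coev_A\dashv R_A'$, and $\Phi_a\dashv\Phi_a^r$; the Serre equivalences $r_A,\ell_A$; the cyclic symmetry $m$; and a single instance of $\alpha_{a,b}$), and the equivalence is obtained by a finite sequence of local moves justified by triangle identities and the compatibilities of Remark~\ref{coevad}.

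Where you diverge from the paper is in the organizational device. You propose two separate Morse functions (one per meridian) connected by a Cerf-type interpolation with birth/death and handle-slide moves. The paper instead fixes a \emph{single} Morse function $F(z_1,z_2)=\operatorname{Re}(z_1)+\operatorname{Re}(z_2)$ on $\BT$, whose nine critical events (four genuine critical points plus five stratified crossings) share only five critical values, so that there are many orderings in which to pass through them. These orderings are packaged as a poset $\cP$; its saturated subsets form a category $\cQ$, and the paper builds an explicit functor $T:\cQ\to\Omega\cA$ with $T(\emptyset)=T(\cP)=1_{\Omega\cA}$. Functoriality of $T$ makes the invariant $T(\emptyset,\cP)$ well-defined independent of path, and two short propositions then identify the two iterated traces with the factorizations of $T(\emptyset,\cP)$ along the chains $\emptyset\to s_a\to \M_a\to\cP$ and $\emptyset\to s_b\to \M_b\to\cP$. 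The payoff is that the ``bookkeeping'' you flag as the main obstacle is broken into small local checks (one per cell of $\cQ$) rather than one long rewrite between two fully expanded composites; your Cerf picture would also work, but it introduces moves (birth/death) that are not actually needed, since no deformation of the Morse function is required---only a reordering of simultaneous critical events.
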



\subsubsection{Shearing equivalence}

%
%

\begin{prop} 
Let $A$ be a 2-dualizable object of a symmetric monoidal $(\oo, 2)$-category $\cA$.

Suppose given a right dualizable endomorphism
$$
\xymatrix{
\Phi: A\ar[r] & A
}
$$
and the identity self-commuting structure 
$$
\xymatrix{
\id_{\Phi\circ \Phi}:\Phi\circ \Phi \ar[r]^-\sim & \Phi\circ \Phi
}
$$

Then the induced trace map is canonically equivalent to the identity
$$
\xymatrix{
\varphi(\Phi, \id_{\Phi\circ \Phi}) \simeq \id_{\Tr(\Phi)}:\Tr(\Phi)\ar[r] & \Tr(\Phi)
}$$

\end{prop}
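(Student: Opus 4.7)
The plan is to unfold $\varphi(\Phi,\id_{\Phi\circ\Phi})$ using Definition~\ref{def:trace funct} and then recognize the resulting composition as a zig-zag that collapses to the identity via the triangle identities for the adjunction $\Phi\dashv\Phi^r$. Concretely, with $\Psi=\Phi$, $\Phi'=\Phi$, and $\alpha=\id_{\Phi\circ\Phi}$, the induced map unfolds as
\[
\Tr(\Phi)\xrightarrow{\eta_\Phi}\Tr(\Phi^r\circ\Phi\circ\Phi)\xrightarrow{m(\Phi^r,\,\Phi\circ\Phi)}\Tr(\Phi\circ\Phi\circ\Phi^r)\xrightarrow{\epsilon_\Phi}\Tr(\Phi),
\]
in which the middle $\alpha$-step is literally the identity and is therefore invisible. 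The goal is to show this composition is canonically equivalent to $\id_{\Tr(\Phi)}$.

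The next step is to factor the cyclic symmetry $m(\Phi^r,\Phi\circ\Phi)$ through the intermediate trace $\Tr(\Phi\circ\Phi^r\circ\Phi)$, writing it as $m(\Phi\circ\Phi^r,\Phi)\circ m(\Phi^r\circ\Phi,\Phi)$ by the standard compatibility of cyclic symmetry with composition. One then identifies the partial composition $\Tr(\Phi)\to\Tr(\Phi^r\Phi\Phi)\to\Tr(\Phi\Phi^r\Phi)$ with the map induced on traces by the ``middle insertion'' $\Phi\xrightarrow{\id\otimes\eta_\Phi\otimes\id}\Phi\circ\Phi^r\circ\Phi$, and similarly identifies $\Tr(\Phi\Phi^r\Phi)\to\Tr(\Phi\Phi\Phi^r)\to\Tr(\Phi)$ with the trace of the ``middle contraction'' $\Phi\circ\Phi^r\circ\Phi\xrightarrow{\id\otimes\epsilon_\Phi\otimes\id}\Phi$. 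Under these identifications, functoriality of the trace reduces the full statement to the snake identity $(\id\otimes\epsilon_\Phi)\circ(\eta_\Phi\otimes\id)=\id_\Phi$ for $\Phi\dashv\Phi^r$, which yields the claimed equivalence with $\id_{\Tr(\Phi)}$.

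The only nontrivial verification, and the expected main obstacle, is the naturality claim that the cyclic symmetry of a composition intertwines ``insertion on the outside of a trace'' with ``insertion in the middle'' — that is, that $m(\Phi^r\circ\Phi,\Phi)\circ\eta_\Phi$ is canonically homotopic to direct middle insertion of $\eta_\Phi$ into the composition defining $\Tr(\Phi)$, and dually for $\epsilon_\Phi$ and $m(\Phi\circ\Phi^r,\Phi)$. This is in the same spirit as, and could equivalently be carried out via, the Morse-theoretic picture calculus used throughout the paper: both operations correspond to attaching the same elementary decorated piece to the circle defining $\Tr(\Phi)$, merely parsed relative to different choices of ``12 o'clock''. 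Once this rotational invariance is established, the snake identity collapses the whole composition to the identity, in accordance with the picture of $\varphi(\Phi,\id_{\Phi\circ\Phi})$ as a cylindrical cobordism carrying only a Morse-cancellable birth/death pair.
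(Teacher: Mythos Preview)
Your proposal is correct and follows essentially the same approach as the paper: unfold $\varphi(\Phi,\id_{\Phi\circ\Phi})$ into the composition $\eta_\Phi$, cyclic symmetry, $\epsilon_\Phi$, then use cyclic symmetry to recognize this as $\Tr$ applied to the snake identity $\Phi\to\Phi\circ\Phi^r\circ\Phi\to\Phi$. The paper's proof is terser---it simply asserts that ``using the cyclic symmetry of the trace'' one identifies the composition with $\Tr$ applied to the standard duality composition---whereas you spell out the intermediate factorization through $\Tr(\Phi\circ\Phi^r\circ\Phi)$ and the naturality that makes it work; but the underlying argument is the same.
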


\begin{proof}
Working with the identity commuting structure, we calculate the trace morphism
$$
\xymatrix{
\varphi(\Phi, \id_{\Phi\circ \Phi}): 
\Tr(\Phi)  \ar[r]^-{\eta_\Phi} & \Tr(\Phi^r\circ  \Phi\circ  \Phi) \ar[rr]^-{m(\Phi^r, \Phi\circ\Phi)}& & \Tr(\Phi\circ \Phi\circ \Phi^r ) \ar[r]^-{\epsilon_\Phi} & \Tr(\Phi)
}
$$

Using the cyclic symmetry of the trace, we can identify this composition with 
the functor $\Tr$ applied to the standard duality composition 
$$\xymatrix{
\id_\Phi: \Phi  \ar[r]^-{\eta_\Phi}  & \Phi\circ  \Phi^r\circ  \Phi  \ar[r]^-{\epsilon_\Phi} & \Phi
}
$$
\end{proof}

\begin{corollary}\label{switch} 
Let $A$ be a 2-dualizable object of a symmetric monoidal $(\oo, 2)$-category $\cA$.

Suppose given right dualizable endomorphisms
$$
\xymatrix{
\Phi_a,\Phi_b: A\ar[r] & A
}
$$
and a commuting transformation 
$$
\xymatrix{
\alpha_{a, b}:\Phi_a\circ \Phi_b \ar[r] & \Phi_b\circ \Phi_a
}
$$

Then there is
a canonical equivalence 
$$
\xymatrix{
 \varphi(\Phi_a \circ \Phi_b, \alpha_{a, b}')\simeq \varphi(\Phi_a, \alpha_{a. b}):\Tr(\Phi_b)\ar[r] & \Tr(\Phi_b)
}$$
where $\alpha_{a, b}'$ denotes the composition of $\alpha_{a, b}$ with the identity self-commuting structure of $\Phi_b$.
\end{corollary}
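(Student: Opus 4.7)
The plan is to reduce the corollary to the preceding proposition. Observe that, by its very construction, $\alpha'_{a,b}$ is assembled from $\alpha_{a,b}$ (the commuting structure of $\Phi_a$ with $\Phi_b$) and the identity self-commuting structure $\id_{\Phi_b \circ \Phi_b}$ of $\Phi_b$, in the sense that it acts as $\alpha_{a,b}$ on the inner pair $\Phi_a \circ \Phi_b$ of $(\Phi_a \circ \Phi_b) \circ \Phi_b$ and as the identity on the outer $\Phi_b$. The goal is then to establish the multiplicativity property
\[
\varphi(\Phi_a \circ \Phi_b,\, \alpha'_{a,b}) \simeq \varphi(\Phi_a, \alpha_{a,b}) \circ \varphi(\Phi_b, \id_{\Phi_b \circ \Phi_b}).
\]
Granting this, the preceding proposition identifies $\varphi(\Phi_b, \id_{\Phi_b \circ \Phi_b})$ with $\id_{\Tr(\Phi_b)}$, and the corollary follows at once.

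To establish the displayed multiplicativity, I would unwind Definition \ref{def:trace funct} on both sides. The three key ingredients are: (i) the standard identification $(\Phi_a \circ \Phi_b)^r \simeq \Phi_b^r \circ \Phi_a^r$, under which the unit $\eta_{\Phi_a \circ \Phi_b}$ decomposes as $\eta_{\Phi_b}$ followed by a conjugate of $\eta_{\Phi_a}$, and likewise $\eps_{\Phi_a \circ \Phi_b}$ decomposes into $\eps_{\Phi_a}$ and $\eps_{\Phi_b}$; (ii) the aforementioned decomposition of $\alpha'_{a,b}$, which confines the nontrivial commuting structure to the inner pair; and (iii) naturality of the cyclic symmetry $m$, which splits the single cyclic rotation $m(\Psi^r,\Phi_b \circ \Psi)$ for $\Psi = \Phi_a \circ \Phi_b$ into two successive rotations cycling $\Phi_b^r$ and $\Phi_a^r$ past the interior separately. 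Regrouping the unwound four-step composition produces an inner block equivalent to $\varphi(\Phi_b, \id_{\Phi_b \circ \Phi_b})$ followed by an outer block equivalent to $\varphi(\Phi_a, \alpha_{a,b})$.

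The main obstacle is the bookkeeping of the cyclic symmetries: one must track the precise order of insertions of $\Phi_i \circ \Phi_i^r$ zigzags (for $i = a, b$) and their interaction with the successive applications of $m$. This is most transparent in the pictorial calculus employed throughout the paper: the diagram for $\varphi(\Phi_a \circ \Phi_b, \alpha'_{a,b})$ consists of an outer $\Phi_b$-loop circling an inner $\Phi_a$-loop, connected along the top by $\alpha_{a,b}$; straightening the outer $\Phi_b$-loop by the snake identity (exactly as in the proof of the preceding proposition) transforms the diagram into that for $\varphi(\Phi_a, \alpha_{a,b})$, providing a geometric realization of the multiplicativity displayed above.
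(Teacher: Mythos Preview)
Your proposal is correct and follows essentially the same approach as the paper: reduce to the multiplicativity $\varphi(\Phi_a \circ \Phi_b,\alpha'_{a,b}) \simeq \varphi(\Phi_a,\alpha_{a,b}) \circ \varphi(\Phi_b,\id_{\Phi_b\circ\Phi_b})$ and then invoke the preceding proposition. The paper simply cites this multiplicativity as ``functoriality of trace'' (referring to the companion paper \cite{primary}) rather than unwinding it explicitly as you do, but the strategy is identical.
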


\begin{proof}
This is an easily checked instance of the functoriality of trace (see for example~\cite{primary} for general functoriality). By the above proposition, 
the composition $\Phi_a\circ \Phi_b$ with commuting structure  $\alpha_{a, b}'$ induces the trace morphism 
$\varphi(\Phi_a, \alpha_{a, b})\circ \id_{\Tr(\Phi_b)}$.
\end{proof}

%
%
%
%
%
%
%
%
%



\section{Factoring traces}\label{factorsect}

Let $A$ be a 2-dualizable object of a symmetric monoidal $(\oo, 2)$-category $\cA$.

Suppose given right dualizable endomorphisms
$$
\xymatrix{
\Phi_a,\Phi_b: A\ar[r] & A
}
$$
and a commuting transformation 
$$
\xymatrix{
\alpha_{a, b}:\Phi_a\circ \Phi_b \ar[r] & \Phi_b\circ \Phi_a
}
$$

Our aim here is to construct the canonical equivalence
$$
\Tr_{\Tr(\Phi_b)}(\varphi(\Phi_a, \alpha_{a,b}))  \simeq 
\Tr_{\Tr(\Phi_a^r)}(\varphi(\Phi_b, \alpha_{b,a^r}))   
\in \Omega\Omega\cA = \End_{\Omega \cA}(1_{\Omega\cA})
$$
asserted   in Theorem~\ref{mainthm}.

By definition, each side is the composition of independent elementary constructions.
We will explain how these constructions can be interwoven so that an equivalence of their compositions is evident.
As explained immediately below, it will be helpful to organize the elementary constructions
into a sequence of critical events.



\subsection{Organization of critical events}
We provide here an informal pictorial framework for organizing our future constructions, in the spirit of
topological field theory with defects (as formalized by the cobordism hypothesis with singularities).

Let $\BS =\{ z\in \C \, | \, |z| = 1\}$ denote the unit circle.
We will mark the point $i\in \BS$ and think of it as distinguished.
Consider the Morse function given by the real part of a complex number
$$
\xymatrix{
f:\BS\ar[r] & \BR
&
f(z) = \on{Re}(z)
}
$$
It has two usual critical points: a minimum at $-1\in \BS$ with critical value $f(-1) = -1$,
and a maximum at $1\in \BS$ with critical value $f(1) = 1$. We also think of 
the marked point $i\in \BS$ as a critical event with critical value $f(i) = 0$. 
We will picture marking the critical event with the transformations $\Phi_{k}$ for $k=a,b$,
and of the marked circle as representing the trace $\Tr(\Phi_k)$.

Now we will consider the product of two copies of the above setup. Let $\BS_1, \BS_2$ denote two copies of the circle $\BS$,
and let $\BT = \BS_1 \times \BS_2$
denote the two-dimensional torus. We consider it stratified by the marked submanifolds
$$
\xymatrix{
a = \{i\} \times \{ i \}
&
A_1 = \{i\} \times \BS_2
&
A_2 =  \BS_1 \times \{i\}
}
$$
We picture each of the circles as marked by one of the transformations $\Phi_k$, and
the intersection point $a$ as marked by the commuting transformation $\alpha_{ab}$.
The idea is now to assign a well defined invariant to the entire setup, which will be the secondary trace,
leading to the secondary trace formula when the invariant is parsed in different ways.

Consider the Morse function given by the sum of the Morse functions on each factor
$$
\xymatrix{
F:\BT\ar[r] & \BR
&
F(z_1, z_2) = f(z_1) + f(z_2) 
}
$$
It has four usual critical points: 
\begin{enumerate}
\item a minimum at $(-1, -1)$ with critical value $f(-1, -1) = -2$,
\item two saddles at $(-1, 1), (1, -1)$ with critical value $f(-1, 1) = f(1, -1)= 0$,
\item a maximum at $(1, 1)$ with critical value $f(1, 1) = 2$.
\end{enumerate}
We also think of the stratification as providing five further critical events:
\begin{enumerate}
\item the minimum $(i, -1)$ and maximum $(i, 1)$ of the restriction $F|_{A_1}$ with critical values $F(i, -1) = -1$
and $F(i, 1) = 1$ respectively,
\item the minimum $(-1, i)$ and maximum $(1, i)$ of the restriction $F|_{A_2}$ with critical values $F(-1, i) = -1$
and $F(1, i) = 1$ respectively,
\item the point $a = (i, i)$ with critical value $F(i, i) = 0$.
\end{enumerate}

In summary, there are 9 critical events given by the points $(z_1, z_2) \in \BT$ with $z_1, z_2\in \{-1, i, 1\}$
with 5 critical values $-2, -1, 0, 1, 2 \in \BR$. Above each of the critical values $-2, 2$, there is one critical event;
above each of the critical values $-1, 1$, there are two critical events; and
above the critical value $0$, there are three critical events. Note: though some critical events share the same critical value
in $\BR$,
all of the critical events are isolated in $\BT$. 

Here is a schematic picture summarizing the above discussion. 

$$
\xymatrix@!=0.5pc{
&& (1,1) && &&&& 2\\
& (1, i) \ar[ur]&&(i, 1)\ar[ul] && &&& 1 \\
(1,-1)\ar[ur] && (i,i) \ar[ul]\ar[ur]&& (-1,1) \ar[ul]& \ar[rr]^-F &&& 0\\
& (-1, i)\ar[ul] \ar[ur]&& (i, -1) \ar[ur]\ar[ul]& &&&& -1\\
&& \ar[ur]\ar[ul] (-1,-1) && &&&& -2\\
}
$$

We have added arrows between the critical points to organize them into a poset $\cP$. 
We will work with subsets $p \subset \cP$ that are saturated in the sense that whenever $x\in p$ and $y$ is less than $x$, then $y\in p$. 

Let $\cQ$ denote the category
whose objects are saturated subsets of $\cP$ and whose morphisms are given by inclusions.
It has an initial object given by the empty subset $\emptyset$ and a terminal object given by 
the entire poset $\cP$ itself.


\subsection{Critical transformations}

We continue with the category  $\cQ$ introduced immediately above and set $\cC = \Omega\cA = \End_{\cA}(1_\cA)$. 
 In this section, we explicitly construct a functor
$$
\xymatrix{
T:\cQ \ar[r] & \cC
}
$$
In particular, to the initial and terminal objects $\emptyset, \cP\in \cQ$,  we will assign the monoidal unit
$$T(\emptyset) = T(\cP) = 1_{\cC}.
$$ 
At the end of the day, evaluating $T$ on the unique morphism $\es\to \cP$, we will obtain an invariant 
$$
\xymatrix{
T(\emptyset, \cP)\in \Omega\Omega \cA = \End_{\cC}(1_{\cC}).
}
$$

We will then deduce Theorem~\ref{mainthm}
 by writing $\emptyset\to \cP$, and hence
the constructed endomorphism $T(\emptyset, \cP)$, 
as a composition in multiple ways. We will see that depending on our viewpoint,
we can equally regard $T(\emptyset, \cP)$ as either the left or right hand side of
Theorem~\ref{mainthm}.




\begin{example}[Secondary dimension]
To help guide the reader through the somewhat involved construction, we first quickly sketch the case where the endomorphisms
and the commuting transformation are all the identity. 
We will perform Morse theory on the  2-torus, with no markings, to illustrate
the secondary dimension $\dim (\dim(A))$. In order to keep track of framings and elementary moves, we draw this familiar picture in a less familiar way below.

First, at the index 0 critical point, we attach a disk whose framed boundary circle
represents the  Hochschild cohomology of $A$, realized as the trace of the inverse Serre equivalence $\ell_A$,
and apply the unit morphism $1_\cC\to \Tr(\ell_A)$.
In the next two parallel pictures, we alternatively attach a 1-handle in two different ways as we pass one of the two index 1 critical points. Both of these level sets (disjoint unions of two circles with the standard 2-framing) represent the object $\dim(A)\ot \dim(A)\simeq \dim(A)\ot \dim(A)^\vee$
and the composite morphism $1_\cC\to  \Tr(\ell_A) \to \dim(A)\ot \dim(A)^\vee$  is 
the coevaluation map of $\dim(A)$.
The next picture illustrates the framed level set obtained after passing through both index 1 critical  points. We may apply an isotopy to see that it represents the trace of the Serre equivalence $r_A$. Finally,
 at the index 2 critical point,
we cap off with a disk and find the resulting composite morphism
$ \dim(A)\ot \dim(A)^\vee \to \Tr(r_A) \to 1_\cC $  
  is 
the evaluation map of $\dim(A)$.


$$
\xymatrix{
\ar@/_2pc/@{-}[d] A \ar[rr]^-{} &&A \ar@/^3pc/@{-}[ddd] \\
A^{\vee} \ar[rr]^-{} &&A^{\vee} \ar@/^1pc/@{-}[d]\\
\ar@/_2pc/@{-}[d] A \ar[rr]^-{\ell_A} &&A\\
  A^{\vee}\ar[rr]^-{} &&A^{\vee}
}
$$

$$
\xymatrix{
\ar@/_2pc/@{-}[d] A  \ar@/^2pc/@{-}[d] && \ar@/_2pc/@{-}[d] 
A  \ar@/^3pc/@{-}[ddd]  &&&&
\ar@/_2pc/@{-}[d] A  \ar@/^3.5pc/@{-}[ddd] &&& \ar@/_3.5pc/@{-}[ddd]  
 A \ar@/^3pc/@{-}[ddd] \\
  A^{\vee} &  & A^{\vee} \ar@/^1pc/@{-}[d]&&&&
 A^{\vee} \ar'[r]'[rr][rrr] &&&A^{\vee}\ar@/^1pc/@{-}[d] \\
\ar@/_2pc/@{-}[d] A^{\vee} \ar[rr]&&A^{\vee}&&&&
\ar@/_2pc/@{-}[d] A \ar'[r]'[rr][rrr] &&& A\\
 A \ar[rr] && A  &&&&    A^{\vee}&  && A^{\vee}
}
$$

$$
\xymatrix@!=0.75pc{
\ar@/_2.25pc/@{-}[d]A  \ar@/^3pc/@{-}[ddd] 
&&& 
\ar@/_3pc/@{-}[ddd]  A   \ar@/^2pc/@{-}[d] && \ar@/_2pc/@{-}[d]   
   A \ar@/^3.5pc/@{-}[ddd]  \\ 
A^{\vee} \ar'[r]'[rr][rrr] &&& A^{\vee} && A^{\vee}   \ar@/^1.5pc/@{-}[d] \\
\ar@/_2.25pc/@{-}[d] A \ar'[r]'[rr][rrrrr]^-{r_A}&&&&& A\\
  A^{\vee}&  && A^{\vee}\ar[rr]& & A^{\vee} 
}
$$

$$
\xymatrix{
&&\ar@/_2pc/@{-}[d] A \ar@/^3pc/@{-}[ddd] \\
&&A^{\vee} \ar@/^1pc/@{-}[d]\\
\ar@/_2pc/@{-}[d] A \ar[rr]^-{r_A} &&A\\
  A^{\vee}\ar[rr]^-{} &&A^{\vee}
}
$$
\end{example}


\subsubsection{Global minimum}

Set $\m\in \cQ$ to be the saturated subset consisting of the global minimum $(-1,-1) \in \cP$ alone.

\begin{defn} We define the object 
$$T(\m) =  \ev_A \circ L_A 
$$ 
and the morphism
$$
\xymatrix{
T(\es, \m)= \eta:1_\cC \ar[r] &  \ev_A \circ L_A 
}
$$
 to be the unit of the adjunction.
\end{defn}

It will be useful to introduce the following reinterpretations of $T(\m)$.

\begin{remark}\label{firstreform}
We will work with the  canonical identifications
$$
\ev_A \circ L_A \simeq  \ev_A \circ (\id_A \otimes \ell^{op}_A) \circ \coev_A \simeq R'_A\circ \coev_A.
$$
which are compatible with the unit maps of the two adjoint pairs appearing.

With this understanding, we can picture $T(\m)$ in the form
$$
\xymatrix{
 \ar@/_2pc/@{-}[d] A \ar[r]^-{\id_{A}} &A \ar@/^2pc/@{-}[d] \\
A^{op} \ar[r]^-{\ell^{op}_A}&A^{op}\\
}
$$
where the arcs denote the evaluation and coevaluation morphisms. 
\end{remark}

\begin{remark}\label{reform}

Using the dualizability of $A$, 
we can write 
 $\ell_{A}^{op}$ as the composition
$$
\xymatrix{
\ell_{A}^{op}:A^{op} \ar[rr]^-{\id_{A^{op}} \otimes \coev_A} && A^{op} \otimes A \otimes A^{op}
\ar[rr]^{\id_A^{op} \otimes \ell_A \otimes\id_A^{op} } && A^{op} \otimes A \otimes A^{op}
\ar[rr]^-{\ev_A\otimes \id_{A^{op}}} &&
A^{op}
}
$$
Substituting this into $T(\m)$ as presented in Remark~\ref{firstreform}, allows us to rewrite it in the form
\begin{equation}\label{fourstrandabbrev}
T(\m) \simeq  \Ev  \circ \ell_3
\circ
\Coev
\end{equation}
where we have adopted the abbreviations
$$
\xymatrix{
\Ev =  \ev_{1, 4} \otimes \ev_{2, 3}
&
\Coev =  \coev_{1,2} \otimes \coev_{3, 4}
}
$$ 
Here the subscripts on the identity, inverse Serre equivalence, evaluations and coevaluations denote which strands they involve.
With this notation in mind, we will similarly adorn other functors 
with analogous subscripts to denote which strands they involve.
To simplify the notation further, 
we have also dropped identity morphisms from our formulas.

We can depict the above presentation of $T(\m)$ in the form
$$
\xymatrix{
\ar@/_2pc/@{-}[d] A \ar[r]^-{} &A \ar@/^3pc/@{-}[ddd] \\
A^{op} \ar[r]^-{} &A^{op} \ar@/^1pc/@{-}[d]\\
\ar@/_2pc/@{-}[d] A \ar[r]^-{\ell_A} &A\\
  A^{op}\ar[r]^-{} &A^{op}
}
$$

\end{remark}

\subsubsection{Stratified minima}

Set $\m_a, \m_b\in \cQ$ to be the saturated subsets 
$$
\m_a = \{(-1,-1), (i, -1)\}, \m_b=  \{(-1,-1), (i, -1)\} \subset \cP.
$$

Recall that for $k = a, b$, the endomorphism $\Phi_k:A \to A$ is right dualizable
with right adjoint $\Phi_k^r: A\to A$
and unit and counit morphisms
$$
\xymatrix{
\eta_ {\Phi_k}: \id_A \ar[r] & \Phi_k^r \circ \Phi_k
&
\eps_{\Phi_k} : \Phi_k \circ \Phi_k^r\ar[r] & \id_A
}$$

\begin{defn} For $k=a,b$, we define the object
$$
T(\m_k) =   \ev_A \circ ( (\Phi_k^r \circ \Phi_k ) \otimes \id_{A^{op}})\circ L_A
$$
\end{defn}

\begin{remark}
Let us place the above definition in the setting of Remark~\ref{reform},
and in particular in the notation of Equation~\ref{fourstrandabbrev}.

For $k=a,b$, we have a canonical equivalence
$$
T(\m_k) \simeq \Ev \circ ((\Phi_k^r \circ \Phi_k)_1\otimes \ell_{3})\circ\Coev
$$
which we can depict in the form
$$
\xymatrix{
\ar@/_2pc/@{-}[d] A \ar[r]^-{\Phi_k } &A \ar[r]^-{\Phi_k^r } & A\ar@/^3pc/@{-}[ddd]\\
A^{op} \ar[rr] &&A^{op}\ar@/^1pc/@{-}[d]\\
 \ar@/_2pc/@{-}[d]A \ar[rr]^-{\ell_A} &&A\\
  A^{op}\ar[rr]&&A^{op}
}
$$
\end{remark}

\begin{defn}
For $k=a,b$, we define the morphism
$$
\xymatrix{
T(\m, \m_k ): \ev_A \circ L_A \ar[r] &  
 \ev_A \circ ( (\Phi_k^r \circ \Phi_k ) \otimes \id_{A^{op}})\circ L_A
}
$$
 to be induced by the unit $\eta_{\Phi_k}$.
\end{defn}

\begin{remark}
With the above definition in hand, to continue to assemble the functor $T$, we obviously must take 
$$
\xymatrix{
T(\es, \m_k) = T(\m, \m_k) \circ T(\es, \m)
& k=a,b.
}
$$
We will often comment minimally on such evident aspects of the construction.
\end{remark}


\subsubsection{Crossing}
Set $\cross\in \cQ$ to be the saturated subset
$$
\cross = \{(-1,-1), (i, -1),  (i, -1), (i, i)\} \subset \cP.
$$

\begin{defn} We define the objects
$$
T(\m_a \cup \m_b) =   \ev_A \circ ( (\Phi_b^r \circ \Phi_b \circ\Phi_a^r  \circ \Phi_a ) \otimes \id_{A^{op}})\circ L_A
$$
$$
T(\cross) =   \ev_A \circ ( (\Phi_b^r \circ \Phi_a^r \circ\Phi_b  \circ \Phi_a ) \otimes \id_{A^{op}})\circ L_A
$$
\end{defn}

\begin{remark}
Let us place the above definition in the setting of Remark~\ref{reform}.

We have canonical equivalences
$$
T(\m_a \cup \m_b) \simeq \Ev \circ ((\Phi_b^r \circ \Phi_b \circ\Phi_a^r  \circ \Phi_a )_1 \otimes \ell_{3})\circ\Coev
$$
$$
T(\cross) \simeq \Ev \circ ((\Phi_b^r \circ \Phi_a^r \circ\Phi_b  \circ \Phi_a )_1 \otimes \ell_{3})\circ\Coev
$$
which we can depict in the following form. First, for $T(\m_a \cup \m_b)$, we have the picture
$$
\xymatrix{
\ar@/_2pc/@{-}[d] A \ar[r]^-{\Phi_a } &A \ar[r]^-{\Phi_a^r } & A \ar[r]^-{\Phi_b } &
A \ar[r]^-{\Phi_b^r }  &A\ar@/^3pc/@{-}[ddd]\\
A^{op} \ar[rrrr] &&&&A^{op}\ar@/^1pc/@{-}[d]\\
  \ar@/_2pc/@{-}[d] A \ar[rrrr]^-{\ell_A}&&&&A\\
  A^{op}\ar[rrrr] &&&&A^{op}
}
$$
Second, for $T(\cross)$, we have the picture
$$
\xymatrix{
\ar@/_2pc/@{-}[d]  A \ar[r]^-{\Phi_a } &A \ar[r]^-{\Phi_b } & A \ar[r]^-{\Phi_a^r } &A \ar[r]^-{\Phi_b^r } 
&A \ar@/^3pc/@{-}[ddd]\\
A^{op} \ar[rrrr] &&&&A^{op}\ar@/^1pc/@{-}[d]\\
\ar@/_2pc/@{-}[d]  A \ar[rrrr]^-{\ell_A}&&&&A\\
  A^{op}\ar[rrrr] &&&&A^{op}
}
$$
\end{remark}

Now recall  the given commuting transformation 
$$
\xymatrix{
\alpha_{a,b}:\Phi_a\circ \Phi_b \ar[r] & \Phi_b\circ \Phi_a
}
$$
In Remark~\ref{rem:alt trace funct}, we explained how $\alpha_{a, b}$ induces a commuting transformation
$$
\xymatrix{
\alpha_{b, a^r}:\Phi_b \circ \Phi_a^r
\ar[r] &    \Phi_a^r \circ \Phi_b 
}
$$

\begin{defn}
For $k = a,b$,
we define the morphism
$$
\xymatrix{
T(\m_k, \m_a\cup \m_b): T(\m_k) \ar[r] & T(\m_a\cup \m_b)
}
$$
 to be induced by the unit $\eta_{\Phi_{k'}}$ where $k'\in \{a,b\}$ is not equal to $k$.

We define the morphism
$$
\xymatrix{
T(\m_a\cup \m_b, \cross):  T(\m_a\cup \m_b) \ar[r] & T(\cross)
}
$$
to be induced by $\alpha_{b, a^r}$.
\end{defn}

\begin{remark}
To continue to assemble the functor $T$, we take all compositions to be what they need to be by our previous definitions.
With our definitions so far, there is no ambiguity due to the obvious equivalence
$$
T(\m_a, \m_a\cup \m_b) \circ T(\m, \m_a) \simeq T(\m_b, \m_a\cup \m_b) \circ  T(\m, \m_b)
$$
\end{remark}



\subsubsection{Saddles}

Set $s_a, s_b\in \cQ$ to be the saturated subsets
$$
s_a = \{(-1,-1), (i, -1),  (1, -1)\}, s_b = \{(-1,-1), (-1, i),  (-1, 1)\} \subset \cP.
$$

\begin{defn} We define the objects
$$
T(s_a) =   \Ev\circ  ( \Phi_a^r)_1 \circ (\coev_{1,4} \circ L'_{1,4}) \circ \ell_3 \circ 
(\Phi_a)_1\circ \Coev
$$
$$
T(s_b) =   \Ev\circ  (\Phi_b^r)_1\circ (R_{1,2} \circ \ev_{1,2}) \circ \ell_3 \circ (\Phi_b)_1\circ \Coev
$$
\end{defn}

\begin{defn} We define the morphisms
$$
\xymatrix{
T(\m_a, s_a ): T(\m_a) \ar[r] & T(s_a)  
}
$$
$$
\xymatrix{
T(\m_a, s_a ): T(\m_a) \ar[r] & T(s_b)  
}
$$
to be induced by the respective units of adjunctions
$$
\xymatrix{
\eta_{1,4}:\id_{1,4} \ar[r] & \coev_{1,4} \circ L'_{1,4}
&
\eta_{1,2}:\id_{1,2} \ar[r] & R_{1,2} \circ \ev_{1,2}
}$$
\end{defn}

\begin{remark}

Using nothing but  the basic properties of $\ev_A, \coev_A$, and canceling
the Serre equivalence with its inverse,
we have  the following evident simplifications
$$
T(s_a) \simeq  \Ev\circ  ( \Phi_a^r)_1\circ (\coev_{1,4} \circ \ev_{1,4})\circ (\Phi_a)_1\circ \Coev
$$
$$
T(s_b) \simeq   \Ev\circ  ( \Phi_b^r)_1\circ (\coev_{1,2} \circ \ev_{1,2}) \circ (\Phi_b)_1\circ \Coev
$$

%

Let us draw four-strand pictures  of the above objects analogous to our previous pictures.
As before, we will represent evaluations and coevaluations by arcs,
and leave strands representing identity maps unlabeled. 
First, we can depict $T(s_a)$ by the picture
$$
\xymatrix{
\ar@/_2pc/@{-}[d] A \ar[r]^-{\Phi_a } &A \ar@/^3.5pc/@{-}[ddd] &&& \ar@/_3.5pc/@{-}[ddd]  
 A \ar[r]^-{\Phi^r_a }  & A\ar@/^3pc/@{-}[ddd] \\
A^{op} \ar'[rr]'[rrr][rrrrr] &&&&&A^{op}\ar@/^1pc/@{-}[d]\\
\ar@/_2pc/@{-}[d] A \ar'[rr]'[rrr][rrrrr] &&&&& A\\
  A^{op}\ar[r] & A^{op} &  && A^{op}\ar[r]& A^{op} 
}
$$
Second, we can depict $T(s_b)$ by the picture
$$
\xymatrix{
\ar@/_2pc/@{-}[d] A \ar[r]^-{\Phi_b } &A \ar@/^2pc/@{-}[d] && \ar@/_2pc/@{-}[d]  
A \ar[r]^-{\Phi_b^r }  &
A  \ar@/^3pc/@{-}[ddd]  \\
  A^{op}\ar[r] & A^{op} &  & A^{op}\ar[r]& A^{op} \ar@/^1pc/@{-}[d]\\
\ar@/_2pc/@{-}[d] A^{op} \ar[rrrr]&&&&A^{op}\\
 A \ar[rrrr] &&&& A
}
$$
\end{remark}

Now we can further define the functor $T$ on the other objects and morphisms of $\cQ$ that are unions
of those we have already encountered. 
In the following definition, we first state a formal expression for the value of the functor $T$ on such  an object, and then depict 
the formal expression with a 
four-strand picture.

\begin{defn}\label{defofsomeunions}

$$
T(s_a\cup\m_b)=  \Ev\circ   (\Phi_b^r \circ \Phi_b\circ \Phi_a^r)_1\circ (\coev_{1,4} \circ \ev_{1,4}) \circ (\Phi_a)_1\circ \Coev
$$
$$
\xymatrix{
\ar@/_2.25pc/@{-}[d]A \ar[r]^-{\Phi_a } &A \ar@/^3.5pc/@{-}[ddd] 
&&& 
\ar@/_3.5pc/@{-}[ddd]   A \ar[r]^-{\Phi_a^r }  &A \ar[r]^-{\Phi_b }  &A \ar[r]^-{\Phi_b^r }  &A 
  \ar@/^3.5pc/@{-}[ddd]\\
A^{op} \ar'[rr]'[rrr][rrrrrrr] &&&&&&&A^{op} \ar@/^1.5pc/@{-}[d]\\
\ar@/_2.25pc/@{-}[d] A \ar'[rr]'[rrr]  [rrrrrrr]&&&&&&& A \\
  A^{op}\ar[r] & A^{op} &  && A^{op}\ar[rrr]& &&A^{op} 
}
$$

$$
T(\m_a \cup s_b) =   \Ev\circ  ( \Phi_b^r)_1\circ (\coev_{1,2} \circ \ev_{1,2})  \circ 
(\Phi_b \circ\Phi_a^r \circ\Phi_a)_1\circ \Coev
$$
$$
\xymatrix{
\ar@/_2pc/@{-}[d]A \ar[r]^-{\Phi_a } & A \ar[r]^-{\Phi^r_a} & A \ar[r]^-{\Phi_b } &
A \ar@/^2pc/@{-}[d] && \ar@/_2pc/@{-}[d]   A \ar[r]^-{\Phi_b^r }   &A  \ar@/^3.5pc/@{-}[ddd]\\
  A^{op}\ar[rrr] && & A^{op}  && A^{op}\ar[r]& A^{op}  \ar@/^1.5pc/@{-}[d]\\
\ar@/_2pc/@{-}[d] A^{op} \ar[rrrrrr]&&&&&&A^{op}\\
 A \ar[rrrrrr] &&&&&& A
}
$$

$$
T(s_a\cup \cross)=  \Ev\circ  ( \Phi_b^r \circ \Phi_a^r\circ \Phi_b)_1\circ (\coev_{1,4} \circ \ev_{1,4}) \circ (\Phi_a)_1
\circ \Coev
$$
$$
\xymatrix{
\ar@/_2pc/@{-}[d]A \ar[r]^-{\Phi_a } &A \ar@/^3.5pc/@{-}[ddd] 
&&& 
\ar@/_3.5pc/@{-}[ddd]  A \ar[r]^-{\Phi_b }  &A \ar[r]^-{\Phi_a^r }  &A \ar[r]^-{\Phi_b^r }  &A 
 \ar@/^3.5pc/@{-}[ddd] \\
A^{op} \ar'[rr]'[rrr][rrrrrrr] &&&&&&&A^{op}  \ar@/^1.5pc/@{-}[d]\\
 \ar@/_2pc/@{-}[d] A \ar'[rr]'[rrr][rrrrrrr] &&&&&&& A\\
  A^{op}\ar[r] & A^{op} &  && A^{op}\ar[rrr]& &&A^{op} 
}
$$

$$
T(\cross \cup s_b) =   \Ev\circ  ( \Phi_b^r )_1\circ (\coev_{1,2} \circ \ev_{1,2}) \circ 
(\Phi_a^r \circ\Phi_b \circ\ \Phi_a)_1 \circ \Coev
$$
$$
\xymatrix{
\ar@/_2pc/@{-}[d]A \ar[r]^-{\Phi_a } & A \ar[r]^-{\Phi_b} & A \ar[r]^-{\Phi_a^r } &
A \ar@/^2pc/@{-}[d] && \ar@/_2pc/@{-}[d]    A \ar[r]^-{\Phi_b^r }  & A \ar@/^3.5pc/@{-}[ddd]\\
  A^{op}\ar[rrr] &&& A^{op}   && A^{op}\ar[r]& A^{op}  \ar@/^1.5pc/@{-}[d]\\
\ar@/_2pc/@{-}[d]A^{op} \ar[rrrrrr]&&&&&&A^{op}\\
 A \ar[rrrrrr] &&&&&& A
}
$$
\end{defn}

All of the morphisms involving the above objects are straightforward concatenations of our previously defined morphisms
and all relations evidently hold.

Finally, we can further define the functor $T$ on the objects $s_a \cup s_b$ and $s_a\cup\cross\cup s_b$.

\begin{defn}
We define  
$$ 
\xymatrix{
T(s_a\cup s_b) =\Ev\circ  (\Phi_b^r)_1 \circ (R_{1,2} \circ \ev_{1,2})\circ
( \Phi_b
\circ \Phi_a^r )_1 \circ \ell_3    
\circ (\coev_{1,4}\circ L'_{1,4}) \circ (\Phi_a)_1\circ \Coev
}$$
$$ 
\xymatrix{
T(s_a\cup\cross\cup s_b) =\Ev\circ  ( \Phi_b^r )_1 \circ (R_{1,2} \circ \ev_{1,2})\circ
( \Phi_a^r
\circ \Phi_b)_1\circ  \ell_3   
\circ (\coev_{1,4}\circ L'_{1,4})  \circ (\Phi_a)_1\circ \Coev
}$$
\end{defn}

\begin{remark}
Once again, using nothing but  the basic properties of $\ev_A, \coev_A$, and canceling
the Serre equivalence with its inverse,
we have  the following simplifications
$$
\xymatrix{
T(s_a\cup s_b) \simeq\Ev\circ  ( \Phi_b^r)_1 \circ (\coev_{1,2} \circ \ev_{1,2})\circ
( \Phi_b
\circ \Phi_a^r)_1 \circ r_3 
\circ (\coev_{1,4} \circ \ev_{1,4}) \circ (\Phi_a)_1 \circ \Coev
}
$$
$$
\xymatrix{
T(s_a\cup\cross\cup s_b) \simeq\Ev\circ  ( \Phi_b)_1 \circ (\coev_{1,2} \circ \ev_{1,2}) \circ
(  \Phi_a^r
\circ\Phi_b^r)_1 \circ r_3 
  \circ (\coev_{1,4} \circ \ev_{1,4}) \circ (\Phi_a)_1\circ \Coev
}
$$

We can depict $ T(s_a\cup s_b)$  as the four-strand picture
$$
\xymatrix@!=0.75pc{
\ar@/_2.25pc/@{-}[d]A \ar[r]^-{\Phi_a } &A \ar@/^3pc/@{-}[ddd] 
&&& 
\ar@/_3pc/@{-}[ddd]  A \ar[r]^-{\Phi_a^r }  &A \ar[r]^-{\Phi_b }  & A \ar@/^2pc/@{-}[d] && \ar@/_2pc/@{-}[d]   
A  \ar[r]^-{\Phi_b^r }  &   A \ar@/^3.5pc/@{-}[ddd] \\ 
A^{op} \ar'[rr]'[rrr][rrrrrr] &&&&&& A^{op} && A^{op} \ar[r] & A^{op} \ar@/^1.5pc/@{-}[d]\\
\ar@/_2.25pc/@{-}[d] A \ar'[rr]'[rrr][rrrrrrrrr]^-{r_A} &&&&&&&&& A\\
  A^{op}\ar[r] & A^{op} &  && A^{op}\ar[rrrrr]& && && A^{op} 
}
$$
Similarly, we can depict  $ T(s_a\cup \cross\cup s_b)$  as the four-strand picture
$$
\xymatrix@!=0.75pc{
\ar@/_2.25pc/@{-}[d]A \ar[r]^-{\Phi_a } &A \ar@/^3pc/@{-}[ddd] 
&&& 
\ar@/_3pc/@{-}[ddd]  A \ar[r]^-{\Phi_b }  &A \ar[r]^-{\Phi_a^r }  & A \ar@/^2pc/@{-}[d] && \ar@/_2pc/@{-}[d]   
A  \ar[r]^-{\Phi_b^r }  &   A \ar@/^3.5pc/@{-}[ddd]  \\ 
A^{op} \ar'[rr]'[rrr][rrrrrr] &&&&&& A^{op} && A^{op} \ar[r] & A^{op}  \ar@/^1.5pc/@{-}[d] \\
\ar@/_2.25pc/@{-}[d] A \ar'[rr]'[rrr][rrrrrrrrr]^-{r_A}&&&&&&&&& A\\
  A^{op}\ar[r] & A^{op} &  && A^{op}\ar[rrrrr]& && && A^{op} 
}
$$
\end{remark}

Once again,
all of the morphisms involving the above objects are straightforward concatenations of our previously defined morphisms
and all relations evidently hold.

%

\subsubsection{Stratified maxima}
Set $\M_a, \M_b\in \cQ$ to be the saturated subsets
$$
\M_a = \cP \setminus\{(i, 1), (1,1)\}, \M_b = \cP \setminus\{(1, i), (1,1)\} \subset \cP.
$$

\begin{defn}
For $k=a,b$, we set $T(\M_k) = T(s_k)$.
\end{defn}

Observe that $s_a\cup \cross$ is terminal in $\cQ$ among objects less than $\M_a$,
and similarly $ \cross\cup s_b$ is terminal among objects less than $\M_b$.
Thus to define the functor $T$ on maps to $\M_a$ and $\M_b$, it suffices
to describe the morphisms $T(s_a\cup \cross, \M_b)$ and $T( \cross\cup s_b, \M_b)$. 

To this end, let us construct canonical equivalences 
$$
\xymatrix{
\mu_a:T(s_a \cup \cross) \ar[r]^-\sim& 
\Ev\circ  (  \Phi_a^r\circ \Phi_b\circ \Phi_b^r)_1\circ (\coev_{1,4} \circ \ev_{1,4}) \circ (\Phi_a)_1\circ \Coev
}
$$
$$
\xymatrix{
\mu_b:T( \cross\cup s_b) \ar[r]^-{\sim} & 
\Ev\circ  ( \Phi_b^r)_1\circ (\coev_{1,2} \circ \ev_{1,2}) \circ 
(\Phi_b \circ\ \Phi_a\circ \Phi_a^r )_1\circ \Coev
}
$$
Compare the pictures of $T(s_a\cup\cross)$
and $T(\cross\cup  s_b)$ appearing in Definition~\ref{defofsomeunions} to
the following pictures of the respective asserted targets
$$
\xymatrix{
\ar@/_2pc/@{-}[d]A \ar[r]^-{\Phi_a } &A \ar@/^3.5pc/@{-}[ddd] 
&&& 
\ar@/_3.5pc/@{-}[ddd]  A \ar[r]^-{\Phi_b^r }  &A \ar[r]^-{\Phi_b }  &A \ar[r]^-{\Phi_a^r }  &A 
 \ar@/^3.5pc/@{-}[ddd] \\
A^{op} \ar'[rr]'[rrr][rrrrrrr] &&&&&&&A^{op}  \ar@/^1.5pc/@{-}[d]\\
 \ar@/_2pc/@{-}[d] A \ar'[rr]'[rrr][rrrrrrr] &&&&&&& A\\
  A^{op}\ar[r] & A^{op} &  && A^{op}\ar[rrr]& &&A^{op} 
}
$$
$$
\xymatrix{
\ar@/_2pc/@{-}[d]A \ar[r]^-{\Phi_a^r } & A \ar[r]^-{\Phi_a} & A \ar[r]^-{\Phi_b } &
A \ar@/^2pc/@{-}[d] && \ar@/_2pc/@{-}[d]    A \ar[r]^-{\Phi_b^r }  & A \ar@/^3.5pc/@{-}[ddd]\\
  A^{op}\ar[rrr] &&& A^{op} &  & A^{op}\ar[r]& A^{op}  \ar@/^1.5pc/@{-}[d]\\
\ar@/_2pc/@{-}[d]A^{op} \ar[rrrrrr]&&&&&&A^{op}\\
 A \ar[rrrrrr] &&&&&& A
}
$$
The sought-after equivalences $\mu_a$ and $\mu_b$ are obtained by taking the 
respective endomorphisms
$\Phi_b^r$ and $\Phi_a^r$  clockwise around the figures.

\begin{defn}
For $k=a,b$, we define the morphism
$$
\xymatrix{
T(s_k\cup \cross, \M_k) :T(s_k\cup \cross) \ar[r] & T(\M_k)
}
$$
to be induced by the composition of the equivalence $\mu_k$ constructed above
with the counit $\eps_{\Phi_{k'}}$ where $k'\in \{a,b\}$ is not equal to $k$.
\end{defn}

Now we can similarly define the functor $T$ on the other objects and morphisms of $\cQ$ that are unions
of those we have already encountered. 
In the following definition, we first state a formal expression for the value of the functor $T$ on such an object, and then depict 
the formal expression with a 
four-strand picture.

\begin{defn}
$$
T(\M_a \cup s_b)
=\Ev \circ (\coev_{1,2} \circ \ev_{1,2})
\circ (\Phi_a^r)_1 \circ r_3   
  \circ (\coev_{1,4}\circ \ev_{1,4}) \circ (\Phi_a)_1\circ \Coev
$$
$$
\xymatrix@!=0.75pc{
\ar@/_2.25pc/@{-}[d]A \ar[r]^-{\Phi_a } &A \ar@/^3pc/@{-}[ddd] 
&&& 
\ar@/_3pc/@{-}[ddd]    A \ar[r]^-{\Phi_a^r }  & A \ar@/^2pc/@{-}[d] && \ar@/_2pc/@{-}[d]   
  A \ar@/^3.5pc/@{-}[ddd]  \\ 
A^{op} \ar'[rr]'[rrr][rrrrr] &&&&& A^{op} && A^{op}   \ar@/^1.5pc/@{-}[d] \\
\ar@/_2.25pc/@{-}[d] A \ar'[rr]'[rrr][rrrrrrr]^-{r_A}&&&&&&& A\\
  A^{op}\ar[r] & A^{op} &  && A^{op}\ar[rrr]& &&  A^{op} 
}
$$

$$
T( s_a \cup \M_b)
=\Ev \circ (\Phi_b^r)_1 \circ (\coev_{1,2} \circ \ev_{1,2})
 \circ r_3   
  \circ (\Phi_b)_1 \circ(\coev_{1,4}\circ \ev_{1,4}) \circ \Coev
$$

$$
\xymatrix@!=0.75pc{
\ar@/_2.25pc/@{-}[d]A  \ar@/^3pc/@{-}[ddd] 
&&& 
\ar@/_3pc/@{-}[ddd]  A \ar[r]^-{\Phi_b }  &A  \ar@/^2pc/@{-}[d] && \ar@/_2pc/@{-}[d]   
A  \ar[r]^-{\Phi_b^r }  &   A \ar@/^3.5pc/@{-}[ddd]  \\ 
A^{op} \ar'[r]'[rr][rrrr] &&&& A^{op} && A^{op} \ar[r] & A^{op}  \ar@/^1.5pc/@{-}[d] \\
\ar@/_2.25pc/@{-}[d] A \ar'[r]'[rr][rrrrrrr]^-{r_A}&&&&&&& A\\
  A^{op}&  && A^{op}\ar[rrrr]& &&& A^{op} 
}
$$

$$
T(\M_a\cup \M_b) 
=\Ev \circ (\coev_{1,2} \circ \ev_{1,2})
 \circ r_3   
 \circ(\coev_{1,4}\circ \ev_{1,4}) \circ \Coev
$$
$$
\xymatrix@!=0.75pc{
\ar@/_2.25pc/@{-}[d]A  \ar@/^3pc/@{-}[ddd] 
&&& 
\ar@/_3pc/@{-}[ddd]  A   \ar@/^2pc/@{-}[d] && \ar@/_2pc/@{-}[d]   
   A \ar@/^3.5pc/@{-}[ddd]  \\ 
A^{op} \ar'[r]'[rr][rrr] &&& A^{op} && A^{op}   \ar@/^1.5pc/@{-}[d] \\
\ar@/_2.25pc/@{-}[d] A \ar'[r]'[rr][rrrrr]^-{r_A}&&&&& A\\
  A^{op}&  && A^{op}\ar[rr]& & A^{op} 
}
$$
\end{defn}

All of the morphisms involving the above objects are straightforward concatenations of our previously defined morphisms
and all relations evidently hold.


\subsubsection{Global maximum}
Finally, set $\M\in \cQ$ to be the entire poset $\M = \cP$. Recall that we set $T(\M) = 1_\cC$.

Since $\M_a\cup \M_b$ is terminal among  objects less than $\M$,
to complete the definition of the functor $T$, it suffices to define $T(\M_a\cup \M_b, \M)$.

To this end, observe that 
there is a canonical equivalence
$$
T(\M_a \cup \M_b)\simeq\ev_A \circ (\id_A \otimes r^{op}_A) \circ \coev_A
$$
In other words, we can picture $T(\M_1 \cup \M_2)$ in the form
$$
\xymatrix{
 \ar@/_2pc/@{-}[d] A \ar[r]^-{\id_{A}} &A \ar@/^2pc/@{-}[d] \\
A^{op} \ar[r]^-{r^{op}_A}&A^{op}\\
}
$$
where as usual the arcs denote the evaluation and coevaluation morphisms. 

In turn, 
 there are canonical equivalences
$$
\ev_A \circ R_A \simeq   \ev_A \circ (\id_A \otimes r^{op}_A) \circ \coev_A
\simeq L'_A\circ \coev_A
$$
which are compatible with the counit maps of the two adjoint pairs appearing.


\begin{defn} We define the morphism
$$
\xymatrix{
T(\M_a\cup \M_b, \M) = \eps:   \ev_A \circ R_A \ar[r] & 1_\cC
}$$
to be the counit of the adjunction.
\end{defn}

This concludes the construction of the functor $T:\cQ\to \cC$.


\subsection{Proof of main theorem}
 
 We seek to establish an equivalence
$$
\Tr_{\Tr(\Phi_a)}(\varphi(\Phi_a,\Phi_b, \alpha_{a,b}))  \simeq 
\Tr_{\Tr(\Phi_b^\vee)}(\varphi(\Phi_b^\vee,\Phi_a, \alpha_{b^\vee, a}))\in \End_\cC(1_\cC)
$$

We will show that each side can be interpreted as the morphism 
$$T(\emptyset, \M)\in  \End_\cC(1_\cC).
$$
More precisely, we will identify the left and right hand sides respectively with the compositions
$$
\xymatrix{
T(\es, \M) \simeq T(\M_k, \M) \circ T(s_k, \M_k) \circ T(\es, s_k),
& k =a,b,
}
$$
associated to the sequences
$$
\xymatrix{
\es\ar[r] &  s_k\ar[r]  & \M_k \ar[r] & \M,
& k =a,b.
}
$$

\begin{prop}\label{tracedual}
For $k=a,b$,  there is a natural equivalence
 $$
 \xymatrix{
  T(s_k)\simeq \Tr(\Phi_k) \otimes \Tr(\Phi_k)^\vee
 }$$ 
 and the respective morphisms
$$
\xymatrix{
1_\cC \ar[rr]^-{T(\es, s_k)} &&  T(s_k)\simeq \Tr(\Phi_k) \otimes \Tr(\Phi_k^\vee)
\simeq \Tr(\Phi_k) \otimes \Tr(\Phi_k)^\vee
}
$$
$$
\xymatrix{
 \Tr(\Phi_k) \otimes \Tr(\Phi_k)^\vee \simeq \Tr(\Phi_k) \otimes \Tr(\Phi_k^\vee)
\simeq 
T(s_k) = T(\M_k) \ar[rr]^-{T(\M_k , \M)} &&  1_\cC
}
$$
are the  natural coevaluation and evaluation morphisms.
\end{prop}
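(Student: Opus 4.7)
The plan is to realize $T(s_k)$ as a tensor product of two disjoint closed loops directly from the four-strand definition, then invoke Proposition~\ref{cylinders} to identify $\Tr(\Phi_k^r) \simeq \Tr(\Phi_k)^\vee$, and finally unwind the compositions defining $T(\emptyset, s_k)$ and $T(\M_k, \M)$ to recognize them as the canonical coevaluation and evaluation of the resulting duality.

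First I would use the simplified formula
\[
T(s_a) \simeq \Ev \circ (\Phi_a^r)_1 \circ (\coev_{1,4} \circ \ev_{1,4}) \circ (\Phi_a)_1 \circ \Coev
\]
(and its analogue for $s_b$) to observe that the insertion $\coev_{1,4} \circ \ev_{1,4}$ (respectively $\coev_{1,2} \circ \ev_{1,2}$) severs the four-strand diagram into two disconnected closed loops, a fact already transparent in the four-strand pictures drawn after the definition of $T(s_k)$. One loop contains $\Phi_k$ and manifestly realizes $\Tr(\Phi_k)$; the other contains $\Phi_k^r$ and (after accounting for the Serre twists $\ell_3$ and $r_3$ that cancel in the passage from $L'_{1,4}$ to $\ev_{1,4}$) realizes $\Tr(\Phi_k^r)$. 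Applying Proposition~\ref{cylinders} to rewrite $\Tr(\Phi_k^r) \simeq \Tr(\Phi_k)^\vee$ then yields the claimed $T(s_k) \simeq \Tr(\Phi_k) \otimes \Tr(\Phi_k)^\vee$.

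Next I would trace the composition $T(\emptyset, s_k) = T(\m_k, s_k) \circ T(\m, \m_k) \circ T(\emptyset, \m)$. Each factor is a unit of an adjunction: the unit of $L_A \dashv \ev_A$, then the unit $\eta_{\Phi_k}$ of $\Phi_k \dashv \Phi_k^r$ placed on strand 1, then (for $k=a$) the unit of $L'_{1,4} \dashv \coev_{1,4}$, or (for $k=b$) the unit of $\ev_{1,2} \dashv R_{1,2}$. Using the identifications in Remarks~\ref{coevad} and~\ref{pairad} to realign the Serre twists, this chain agrees precisely with one of the two constructions (proof (1) or proof (2)) in Proposition~\ref{cylinders} of the unit of the composite adjunction $\Tr(\Phi_k) \dashv \Tr(\Phi_k^r)$, which is by definition the coevaluation of the duality in $\Omega\cC$. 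Dually, $T(\M_k, \M) = T(\M_a \cup \M_b, \M) \circ T(\M_k, \M_a \cup \M_b)$ decomposes as a composition of counits of the corresponding right adjunctions, reconstructing the evaluation.

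The main obstacle is purely bookkeeping: carefully tracking which Serre twists $\ell_A$ and $r_A$ appear on which strands, and verifying that they cancel in the expected places so that the residual composition is precisely the coevaluation/evaluation constructed in Proposition~\ref{cylinders}. No new categorical input is required; everything reduces to unpacking definitions and applying the snake identities, but the number of tensor factors demands disciplined organization.
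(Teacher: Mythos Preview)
Your proposal is correct and follows essentially the same route as the paper: factor $T(\es, s_k) = T(\m_k, s_k)\circ T(\m, \m_k)\circ T(\es, \m)$ as a composition of three units and match it with the coevaluation built in the second proof of Proposition~\ref{cylinders}, and dually match $T(\M_k, \M)$ with the evaluation via the composition of counits from the first proof. The only cosmetic difference is that the paper reads off the identification $T(s_k)\simeq \Tr(\Phi_k)\otimes\Tr(\Phi_k)^\vee$ as a byproduct of the unit matching rather than from the four-strand picture first, and it writes the evaluation side as a three-term factorization (one step for each adjunction in the composite pair $(\ev_A\circ\Phi_k\circ\coev_A,\, R'_A\circ\Phi_k^r\circ R_A)$) rather than grouping it into your two poset steps.
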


\begin{proof}

To identify the morphisms, recall the two equivalent proofs of Proposition~\ref{cylinders}.
For a right dualizable morphism $\Phi:A\to A$, the first proof presents 
 the evaluation of the adjoint pair $(\Tr(\Phi) , \Tr(\Phi)^\vee)$ as
the counit of the composition of adjoint pairs
\begin{eqnarray}\label{counit}
\xymatrix{
(\ev_A \circ \Phi\circ \coev_A, R'_A \circ \Phi^r\circ R_A)
}
\end{eqnarray}

The second proof presents
the coevaluation of the adjoint pair $(\Tr(\Phi) , \Tr(\Phi^\vee)$ as
the unit of the composition of adjoint pairs
\begin{eqnarray}\label{unit}
\xymatrix{
(L'_A \circ \Phi\circ L_A, \ev_A \circ \Phi^r\circ \coev_A)
}
\end{eqnarray}

By construction, the morphism
$$
\xymatrix{
T(\es, s_k) \simeq T(\m_k, s_k) \circ T(\m, \m_k)\circ T(\es, \m)
}
$$
is canonically equivalent to the composition of the units of the terms of the adjoint pair~(\ref{unit})
applied to $\Phi_k$.
In particular, the object $T(s_k)$ is indeed $\Tr(\Phi_k) \otimes \Tr(\Phi_k)^\vee$.

Similarly, by construction, the morphism 
$$
\xymatrix{
T(\M_k, \M) \simeq T(\M_k \cup s_{k'}, \M) \circ T(\M_k \cup \m_{k'}, \M_k \cup s_{k'})\circ T(\M_k, \M_k \cup \m_{k'}),
}$$
where $k'\in\{a, b\}$ is not equal to $k$,
is canonically equivalent to the composition of the counits of the terms of the adjoint pair~(\ref{counit})
applied to $\Phi_k$.
\end{proof}

Thanks to the above proposition, for $k=a,b$,
we can regard
 the morphism
$$
\xymatrix{
T(s_k, \M_k): T(s_k)  \ar[r] & T(\M_k)
}
$$
as an endomorphism of $ \Tr(\Phi_k) \otimes \Tr(\Phi_k)^\vee\simeq   \Tr(\Phi_k) \otimes \Tr(\Phi_k^r)$.

\begin{prop}
There are canonical equivalences of endomorphisms
$$
\xymatrix{
T(s_a, \M_a) \simeq \varphi_{\Tr(\Phi_a^r)}(\Phi_b, \alpha_{b, a^r}) \otimes \id_{\Tr(\Phi_a)}
&
T(s_b, \M_b) \simeq  \id_{\Tr(\Phi_b^r)}\otimes \varphi_{\Tr(\Phi_b)} (\Phi_a, \alpha_{a, b})
}$$
\end{prop}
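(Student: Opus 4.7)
The plan is to factor $T(s_k, \M_k)$ step by step according to the intermediate critical events, and match each elementary step to the corresponding step in the definition of the trace map $\varphi$ from Definition~\ref{def:trace funct}. I explain the case $k = a$ in detail; the case $k = b$ is entirely analogous after exchanging the roles of the two endomorphisms.

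First, I unwind $T(s_a, \M_a)$ as the composition through the chain of inclusions $s_a \subset s_a \cup \m_b \subset s_a \cup \cross \subset \M_a$ in $\cQ$. Reading off the intermediate definitions, the first arrow is induced by the unit $\eta_{\Phi_b}$ (which inserts $\Phi_b^r \circ \Phi_b$ on the top inner strand), the second by the commuting transformation $\alpha_{b, a^r}$ (which rewrites $\Phi_b \circ \Phi_a^r$ as $\Phi_a^r \circ \Phi_b$ on that strand), and the third is the composition of the cyclic rearrangement $\mu_a$ with the counit $\epsilon_{\Phi_b}$.

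Next, I invoke Proposition~\ref{tracedual} to identify $T(s_a) = T(\M_a) \simeq \Tr(\Phi_a) \otimes \Tr(\Phi_a^r)$, and observe that in every intermediate object above, the outer strand carrying $\Phi_a$ together with its accompanying $\ev$--$\coev$ arcs is never touched: the construction modifies only the inner strands. Hence each of the three morphisms factors as $\id_{\Tr(\Phi_a)} \otimes (-)$, and the problem reduces to identifying the induced endomorphism on the $\Tr(\Phi_a^r)$ factor with $\varphi(\Phi_b, \alpha_{b, a^r})$.

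Reading off this induced composition, the four steps produce in sequence: the unit $\eta_{\Phi_b}$ sending $\Tr(\Phi_a^r)$ to $\Tr(\Phi_b^r \circ \Phi_b \circ \Phi_a^r)$; the map on traces induced by $\alpha_{b, a^r}$ landing in $\Tr(\Phi_b^r \circ \Phi_a^r \circ \Phi_b)$; the rearrangement $\mu_a$ landing in $\Tr(\Phi_a^r \circ \Phi_b \circ \Phi_b^r)$; and the counit $\epsilon_{\Phi_b}$ returning to $\Tr(\Phi_a^r)$. This matches step for step the four-term composition of Definition~\ref{def:trace funct} defining $\varphi(\Phi_b, \alpha_{b, a^r})$, once we identify $\mu_a$ with the cyclic symmetry $m(\Phi_b^r, \Phi_a^r \circ \Phi_b)$. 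I expect this last identification to be the main obstacle: by construction $\mu_a$ transports $\Phi_b^r$ clockwise around the four-strand picture, which is pictorially exactly how the cyclic symmetry acts, but making the comparison precise requires tracking the Serre equivalences $\ell_A, r_A$ in the unsimplified expressions and checking that their cancellation is compatible with the duality equivalences defining $m$. This is a direct verification using the basic dualizability identities together with the adjoint decompositions of Remarks~\ref{coevad} and~\ref{pairad}.
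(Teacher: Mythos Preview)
Your argument for $k=a$ is essentially the paper's own proof: the same three-step factorization through $s_a \cup \m_b$ and $s_a \cup \cross$, the same identification of each step with the corresponding constituent of $\varphi(\Phi_b,\alpha_{b,a^r})$ from Definition~\ref{def:trace funct}, and the same recognition that the outer $\Tr(\Phi_a)$ strand is untouched. Your caution about identifying $\mu_a$ with the cyclic symmetry is well placed; the paper treats this as immediate from the construction.

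The one point where you diverge from the paper is the claim that the case $k=b$ is ``entirely analogous after exchanging the roles of the two endomorphisms.'' The construction of $T$ is \emph{not} symmetric in $a$ and $b$: the crossing morphism $T(\m_a\cup\m_b,\cross)$ is defined once and for all using $\alpha_{b,a^r}$, not $\alpha_{a,b}$. So when you run the same factorization for $k=b$, the second step is still induced by $\alpha_{b,a^r}:\Phi_b\circ\Phi_a^r\to\Phi_a^r\circ\Phi_b$, which does not match the second step of Definition~\ref{def:trace funct} for $\varphi(\Phi_a,\alpha_{a,b})$. The paper handles this by matching instead against the \emph{alternative} presentation of Remark~\ref{rem:alt trace funct}: there the second step uses the induced morphism $\beta$, and with $\Phi=\Phi'=\Phi_b$, $\Psi=\Phi_a$, $\alpha=\alpha_{a,b}$ one has $\beta=\alpha_{b,a^r}$ by the very definition of $\alpha_{b,a^r}$. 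This is not a serious gap --- Remark~\ref{rem:alt trace funct} already asserts the two presentations agree --- but it is a genuine asymmetry you should acknowledge rather than paper over with ``exchange the roles.''
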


\begin{proof}
For $k=a,b$, consider the terms of the composition
$$
\xymatrix{
T(s_k, \M_k) \simeq T(s_k \cup \cross, \M_k) \circ T(s_k \cup \m_{k'} ) \circ T(s_k, s_k \cup \m_{k'})
}$$
where $k'\in\{a, b\}$ is not equal to $k$.

For $k=a$, if we set $\Phi = \Phi_a^r$, $\Psi = \Phi_b$, and $\alpha =\alpha_{b, a^r}$, the
three 
terms are precisely 
  the three terms in Definition~\ref{def:trace funct} of the morphism
 $\varphi(\Psi, \alpha)$,
tensored with the identity of the factor $\Tr(\Phi_a)$.

For $k=b$, 
if we set $\Phi = \Phi_b$, $\Psi = \Phi_a$, and $\alpha= \alpha_{a, b}$, the
three terms are precisely the three terms in the alternative presentation of Remark~\ref{rem:alt trace funct} of the morphism
 $\varphi(\Psi, \alpha)$,
tensored with the identity of the factor $\Tr(\Phi_b^r)$.
\end{proof}

Taking the above two propositions together concludes the proof of the theorem.



\end{document}